\newcommand{\Eqref}[1]{\eqref{#1}}
\newcommand{\Figref}[1]{Fig.~\ref{#1}}
\newcommand{\Secref}[1]{Section~\ref{#1}}
\newcommand{\Thmref}[1]{Theorem~\ref{#1}}
\newcommand{\Corref}[1]{Corollary~\ref{#1}}
\newcommand{\Lemref}[1]{Lemma~\ref{#1}}
\newcommand{\Propref}[1]{Prop.~\ref{#1}}
\newcommand{\Defref}[1]{Definition~\ref{#1}}
\newcommand{\Claimref}[1]{Claim~\ref{#1}}
\newcommand{\R}{\mathbb{R}} 
\newcommand{\Z}{\mathbb{Z}} 
\newcommand{\Prob}{\mathbb{P}} 
\newcommand{\e}{\epsilon}
\newcommand{\w}{\omega}
\newcommand{\W}{\Omega}
\newcommand{\Norm}[2]{\left\lVert{#1}\right\rVert_{#2}} 
\DeclareMathOperator*{\esssup}{ess\,sup} 
\DeclareMathOperator*{\essinf}{ess\,inf} 
\newcommand{\AND}{\textrm{ and }}
\newcommand{\almostsurely}{\textrm{a.s}}
\renewcommand{\limsup}{\varlimsup}
\renewcommand{\liminf}{\varliminf}
\DeclareMathOperator{\argmin}{argmin}
\newcommand{\E}{\mathbb{E}}
\newcommand{\Lip}{\text{Lip}} 
\def\@wraptoccontribs#1#2{}
\theoremstyle{plain}
\newtheorem{theorem}{Theorem}[section]
\newtheorem*{theorem*}{Theorem}
\newtheorem{lemma}[theorem]{Lemma}
\newtheorem*{lemma*}{Lemma}
\newtheorem{cor}[theorem]{Corollary}
\newtheorem{prop}[theorem]{Proposition}
\newtheorem*{prop*}{Proposition}
\theoremstyle{definition}
\newtheorem{define}[theorem]{Definition}
\newtheorem*{example*}{Example}
\newtheorem{claim}[theorem]{Claim}
\theoremstyle{remark}
\newtheorem{remark}{Remark}
\newtheorem*{remark*}{Remark}
\newcommand{\FPP}{first-passage percolation}
\newcommand{\FPT}{first-passage time}
\newcommand{\T}{T} 
\newcommand{\Heff}{H} 
\newcommand{\Weight}{W} 
\def\LHS{left-hand side}
\title{Variational formula for the time-constant of first-passage percolation}
\author{Arjun Krishnan}
\address
{Courant Institute of Mathematical Sciences\newline
\indent New York University\newline
\indent 251 Mercer Street\newline
\indent New York, NY-10012
\indent United States of America}
\curraddr
{\newline
University of Utah\newline
\indent 155 South 1500 East, JWB 209\newline
\indent Salt Lake City, UT 84112
\indent United States of America
}
\date{\today}
\email{arjunkc@gmail.com}
\subjclass[2000]{60K35,~82B43}
\keywords{first-passage percolation, limit-shape, time-constant, stochastic homogenization}
\newcommand{\ack}{\section*{Acknowledgements}}
\begin{document}

\begin{abstract}
We consider first-passage percolation with positive, stationary-ergodic weights on the square lattice $\Z^d$. Let $T(x)$ be the first-passage time from the origin to a point $x$ in $\Z^d$. The convergence of the scaled first-passage time $T([nx])/n$ to the time-constant as $n \to \infty$ can be viewed as a problem of homogenization for a discrete Hamilton-Jacobi-Bellman (HJB) equation. We derive an exact variational formula for the time-constant, and construct an explicit iteration that produces a minimizer of the variational formula (under a symmetry assumption). We explicitly identify when the iteration produces correctors.
\end{abstract}

\maketitle

\tableofcontents
\addtocontents{toc}{\protect\setcounter{tocdepth}{1}} 
\thispagestyle{plain}

\section{Introduction}
\label{sec:introduction}
\subsection{Overview}
\label{sec:intro-fpp}
First-passage percolation is a growth model in a random medium
introduced by~\citet{hammersley_first-passage_1965}. We consider the case where the random
medium consists of positive edge weights attached to the edges of the
undirected nearest-neighbor graph on the cubic lattice $\Z^d$.

It's useful to think of \FPP~as an optimal-control problem
(see~\citet{krishnan_variational_2014-1} for more details), and so we'll
define the set of \emph{control directions}
\begin{equation}
  A := \{\pm e_1,\ldots,\pm e_d\},
  \label{eq:control-directions}
\end{equation}
where $e_i$ are the canonical unit basis vectors for the lattice
$\Z^d$.
   
Let $(\W,\mathcal{F},\Prob)$ be a probability space. The weights will
be given by a function $\tau\colon \Z^d \times A \times \W \to \R$,
where $\tau(x,c,\w)$ refers to the weight on the edge from $x$ to
$x+c$. Let the function $\tau(x,c,\w)$ be stationary-ergodic
(see~\Defref{def:stationary-ergodic-process}) under translation by
$\Z^d$. We will drop reference to the event $\w$ when it plays no role
in the arguments.
   
A path connecting $x$ to $y$ is a (possibly infinite) ordered set of
nearest-neighbor vertices:
\begin{equation}
  \gamma_{x,y} = \{x=v_0,\ldots,v_{n-1}=y\}.
  \label{eq:generic-path-gamma-as-set-of-vertices}
\end{equation}
The weight or total time of the path is
\[
  \Weight(\gamma_{x,y}) := \sum_{i=0}^{n-1} \tau(v_i,v_{i+1}-v_i).
\]
The first-passage time from $x$ to $y$ is the infimum of the weight
taken over all paths from $x$ to $y$:
\begin{equation*}
  \T(x,y) := \inf_{\gamma_{x,y}} \Weight(\gamma_{x,y}).
\end{equation*}
Since the medium is translation invariant, we will use $\T(x)$ to mean
$\T(x,0)$ unless otherwise specified. For any $x \in \R^d$, define the
scaled~\FPT
\begin{equation}
  \T_n(x) := \frac{\T([nx])}{n}, 
  \label{eq:n-scaling-first-passage-time}
\end{equation}
where $[nx]$ represents the closest lattice point to $nx$ (with some
fixed way to break ties). The law of large numbers for $\T(x)$
involves the existence of the so-called \emph{time constant} $m(x)$
given by
\begin{equation}
  m(x) := \lim_{n \to \infty} \T_n(x).
  \label{eq:time constant}
\end{equation}
In $d=1$, the limit exists since it's simply the ergodic theorem for a
stationary sequence of random numbers. For $d \geq 1$, Kingman's
classical subadditive ergodic theorem~\citep{kingman_ergodic_1968}
shows the existence of $m(x)$ for all $x \in \R^d$. Although one of
the landmark theorems in the field, it gives little quantitative
information about $m(x)$. Proving something substantial about the
time constant has been an open problem for the last several decades.

We rederive a variational formula for the time constant that was
proved in the author's thesis~\citep{krishnan_variational_2014-1}. The
earlier proof relied on the taking discrete first-passage percolation
into the continuum, and using the seminal homogenization results
of~\citet{lions_homogenization_2005} for Hamilton-Jacobi-Bellmann
(HJB) PDEs. In contrast, the proof in this paper is completely
discrete, and no longer relies on viscosity solution theory or any
other machinery from the continuum. The formula is most conveniently
written in terms of the dual-norm of $m(x)$, which we will call the
effective Hamiltonian  $\Heff(p)$ of~\FPP, because of its connection
with homogenization theory. 

We explore properties of the variational representation by
constructing an explicit algorithm to produce a minimizer of the
formula, thereby computing the effective Hamiltonian and
time constant. The algorithm is proved to converge under a symmetry
assumption. The algorithm was designed to produce correctors, a
special kind of minimizer (see \Secref{def:discrete-corrector}). It
turns out that it always produces minimizers, but sometimes fails to
produce correctors. It fails to do so in a very explicit way, and this
sheds some light on the existence of correctors problem in stochastic
homogenization.

\subsection{First-Passage Percolation as a Homogenization Problem}
\label{sec:fpp-as-homo-problem}
Since the \FPT~$\T(x)$ is an optimal-control problem, it has a dynamic
programming principle (DPP) which says that
\[
  \T(x) = \min_{\alpha \in A} \{ \T(x+\alpha) + \tau(x,\alpha) \}.
\]
We can rewrite the DPP as a difference equation in the so-called
\emph{metric} form of the HJB equation. Assuming $\tau(x,\alpha)$ is
positive, we have
\begin{equation*}
  \sup_{\alpha \in A} \left\{ -\frac{(\T(x+\alpha) - \T(x))}{\tau(x,\alpha)} \right\} = 1. 
\end{equation*}
Let's imagine that we were somehow able to extend $\T(x)$ as a smooth
function on~$\R^d$. Taylor-expand $\T(x)$ at $[nx]$ to get
\begin{equation}
  \sup_{\alpha \in A} \left\{ -\frac{D\T([nx])\cdot \alpha + 1/2 (\alpha \cdot D^2\T(\xi)\alpha) }{\tau([nx],\alpha)} \right\} = 1, 
  \label{eq:discrete-HJB-manipulation-into-homo-problem}
\end{equation}
where $D^2T(\xi)$ is the Hessian at $\xi \in \R^d$. Introducing the scaled \FPT~$\T_n(x)$ into~\Eqref{eq:discrete-HJB-manipulation-into-homo-problem}, we get
\begin{equation}
  \sup_{\alpha \in A} \left\{ -\frac{D\T_n(x)\cdot \alpha}{\tau([nx],\alpha)} \right\} + O(n^{-1})= 1. 
  \label{eq:discrete-stochastic-homo-problem-from-intro}
\end{equation}

Equation~\eqref{eq:discrete-stochastic-homo-problem-from-intro} is the
discrete version of a stochastic homogenization problem for HJB
equations on $\R^d$. In the continuum, one would go on to prove that
$\T_n(x) \to m(x)$, where $m(x)$ satisfies the metric HJB equation
\[
    \Heff(m(x)) = 1 \quad \forall x \in \R^d, \quad m(0) = 0. 
\]
Then, one would obtain a variational representation of the effective
Hamiltonian~$H$. In~\FPP, $H(p)$ is the dual norm of $m(x)$.

There are several such results for stochastic Hamilton-Jacobi
equations on $\R^d$, beginning
with~\citet{souganidis_stochastic_1999},
and~\citet{rezakhanlou_homogenization_2000} for superquadratic
Hamiltonians. The concurrent, seminal
papers~\cite{lions_homogenization_2005,kosygina_stochastic_2006} proved very general
homogenization results for HJB equations with second-order viscous
terms, but with different approaches and assumptions. The more recent
work of~\citet{armstrong_stochastic_2012,armstrong_stochastic_2013} focus specifically on metric
Hamiltonians like the one for \FPP; the former considers the
non-viscous problem and the latter considers the viscous problem.

Armstrong et al.\  \citet{armstrong_error_2012} made the following observation: since
$\T(x,y)$ induces a random metric on the lattice, it's reasonable to
believe that there ought to be some relation to metric HJB
equations. This is exactly what we prove.

\subsection{Background on the Time Constant}
\label{sec:background-on-time constant}
We give a brief overview of results about the time constant in
\FPP. Cox and Durrett \citet{cox_limit_1981} proved a celebrated result about the
relationship between the time constant and the so-called limit shape
of~\FPP. It's a ``uniform in all directions'' version of Kingman's
result. Suppose the edge weights are i.i.d., and let
\begin{equation*}
  \hat{R}(x,t) := \{y \in \R^2 : \T([x],[y]) \leq t \}
\end{equation*}
be the \emph{reachable set} starting from $x$. Under a moment
assumption on the edge weights, they prove that for each $\e > 0$
\[
    \{x \colon m(x) \leq 1 - \e \} \subset t^{-1} \hat{R}(0,t) \subset \{x \colon m(x) \leq 1 + \e\} \textrm{ as } t \to \infty  ~\almostsurely.
\]
Hence, the sublevel sets of the time constant
\[
  B_0 = \{x \colon m(x) \leq 1 \}
\]
can be thought of as the limit shape
of~\FPP. Boivin\,\citet{boivin_first_1990} proved the Cox-Durrett result for
stationary-ergodic edge weights; we rely on his more general result in
this paper and state it precisely
in~\Secref{sec:proofs-related-to-discrete-variational-formula}. Despite
these strong existence results on the time constant and limit shape,
surprisingly little else is known in sufficient
generality~\citep{van_den_berg_inequalities_1993}.

The following is a selection of results when the weights are
i.i.d. It's known that $m(e_1) = 0$ iff $F(0) \geq p_T$, where $p_T$
is the critical probability for bond percolation on
$\Z^d$~\citep{kesten_aspects_1986}.
Durrett and Liggett \citet{durrett_shape_1981} described an interesting class of examples
where $B_0$ has flat spots, and so indeed $B_0$ cannot be the
euclidean ball. Marchand \citet{marchand_strict_2002} and
subsequently Auffinger and Damron  \citet{auffinger_differentiability_2013} have recently
explored some aspects of this class of examples in great detail. Exact
results for the limit shape are only available for ``up-and-right''
directed percolation with special edge weight
distributions~\citep{seppalainen_exact_1998,johansson_shape_2000}. In
fact, Johannson~\citet{johansson_shape_2000} not only obtains the limit shape
for directed percolation, but also (loosely speaking) shows that
\[
  \T(n x) \sim n m(x) + C n^{1/3} \xi,
\]
where $C$ is a constant, and $\xi$ is distributed according to the (GUE) Tracy-Widom
distribution. Because of this and other physical
reasons~\citep{krug_universality_1988}, the fluctuations of \FPP~are
thought to be in the so-called KPZ universality class when $m(x)$ is
strictly convex.

Several theorems can be proved assuming properties of the limit
shape. For example, results about the fluctuations of $\T(x)$ can be
obtained if it's known that the limit shape has a ``curvature'' that's
uniformly
bounded~\citep{auffinger_differentiability_2013,newman_surface_1995}. Chatterjee
and Dey \citet{chatterjee_central_2013} prove Gaussian fluctuations
for \FPP\ in thin cylinders under the hypothesis that the limit shape
is strictly convex in the $e_1$-direction. Therefore, properties like
strict convexity, regularity, or the curvature of the limit shape are
of great interest.

We suggest the lecture notes of~Kesten \citet{kesten_aspects_1986} and the
review papers by~Grimmett and Kesten \citet{grimmett_percolation_2012}
and~Blair-Stahn \citet{blair-stahn_first_2010} for a more exhaustive survey of the
many aspects of first-passage percolation.

\section{Main Results}
\label{sec:main-results}
\subsection{Notation}
$\R^+$ and $\Z^+$ refer to the nonnegative real numbers
and integers respectively.

The expectation of a random variable $X$ will be written as $\E[X]$ or
$\int X \Prob(d\w)$. The expectation of $X$ over a set $B$ will be
written as $\E[X,B]$ or $\int_B X \Prob(d\w)$.

$|\cdot|_p$ is the $l^p$ norm on $\R^d$. $|\cdot|$ without a subscript
will mean the euclidean norm. $x\cdot y$ is the usual dot product on
$\R^d$ between $x$ and $y$. $L^p(\W)$ refers to the space of functions
on $\W$ with the usual $\Norm{\cdot}{p}$ norm.

The Lipschitz norm of a function $f\colon \Z^d \to \R$ is defined as
\begin{equation}
  \Lip(f) := \inf \bigl\{ C : |f(x)-f(y)| \leq C |x-y|_{1} ~\forall\, x,y \in \Z^d \bigr\}.
  \label{eq:discrete-lipschitz-norm}
\end{equation}

When convenient, we will use $a \land b = \min\{a,b\}$.

The initialism DPP stands for dynamic programming principle, and HJB
stands for Hamilton-Jacobi-Bellmann.

Paths are usually called $\gamma_{x,y}$ where $x$ and $y$ are its
starting and ending points. When it is clear from context or
irrelevant to the argument, we
may either omit the endpoint of a path ($\gamma_x$), or both the start- and
endpoints ($\gamma$) from the notation. The $i$\textsuperscript{th} vertex in a
path is called $\gamma_x(i)$. 

$d(\gamma)$ is the $l_1$ length of a path, and $\Weight(\gamma)$ is its weight.
When the path is fixed and understood, $\Weight(\gamma(0),\gamma(k))$ will also
be used to represent the weight of the path from vertex $\gamma(0)$ to vertex $\gamma(k)$.

Constants called $C$ in proofs are mutable; i.e., they may change from
line-to-line.

\subsection{Main Results}
Our main result is a variational formula for the time constant $m(x)$
defined in~\Eqref{eq:time constant}. To state this result, we define
the edge weight process $\tau(x,c,\w)$ and a discrete Hamiltonian
for~\FPP.

Let $\Z^d$ act on $\W$ through a family of invertible
measure-preserving maps
\begin{equation}
  \{V^{x} \colon \Omega \to \Omega\}_{x \in \Z^d}.
  \label{eq:translation-group-of-operators}
\end{equation}
These are called translation operators and satisfy
\[ 
  V^{x+y} = V^x \circ V^y \quad \forall~x,y \in \Z^d.
\]

A function (or process) $f\colon \Z^d \times \Omega \to \R$ is said to
be stationary if it satisfies
\begin{equation}
  f(x + y,\omega) = f(x,V^y\omega) \quad \forall~ x,y \in \Z^d.
  \label{eq:stationary-process-definition}
\end{equation}

We say $B \in \mathcal{F}$ is an invariant set if it satisfies
$V^y B = B$ for any $y \in \Z^d \backslash \{\vec{0}\}$ where
$\vec{0}$ is the origin. The family of maps $\{V^x\}_{x \in \Z^d}$ is
called (totally) ergodic if invariant sets are either null or have
full measure.

\begin{define}[Stationary-Ergodic Process]\label{def:stationary-ergodic-process}
  A function $f(x,\w)$ is called \emph{sta\-tion\-ary ergodic} if it's stationary
  on $\Z^d$, and the family of translation operators
  $\{V^x\}_{x \in \Z^d}$ is (totally) ergodic.
\end{define}

Let $A$ be the set of control directions defined
in~\Eqref{eq:control-directions}. Let the edge weights
$\tau\colon\Z^d \times A \times \W \to \R$ be
\begin{enumerate}
\item (essentially) bounded above and below; i.e.,
  \begin{equation}
    \label{eq:basic-assumption-on-edge-weights}
    \begin{split}
      0 <~& a = \essinf_{x,\alpha,\w} \tau(x,\alpha,\w), \\
      & b = \esssup_{x,\alpha,\w} \tau(x,\alpha,\w) < \infty,
    \end{split}
  \end{equation}
\item stationary-ergodic on $\Z^d$, and
\item on the undirected graph on $\Z^d$,
  \begin{equation}
    \tau(x,\alpha,\w) = \tau(x + \alpha, - \alpha,\w).
    \label{eq:algorithm-undirected-edge-weight-assumption}
  \end{equation}
\end{enumerate}

Under this assumption, the time constant $m(x)$ is a convex,
one-homogeneous function in $x$, that's zero iff $x=0$. Let $\Heff(p)$
be the dual norm of $m(x)$ on $\R^d$, defined as usual by
\begin{equation*}
  \Heff(p) = \sup_{m(x) = 1} p\cdot x.
\end{equation*}
$\Heff$ is called an \emph{effective Hamiltonian} for~\FPP~due to the
homogenization connection.

\begin{define}[Discrete derivative]
  For a function $\phi\colon\Z^d \to \R$, let
  \[
    \mathcal{D}_{\alpha} \phi(x) = \phi(x+\alpha) - \phi(x)
  \]
  be its discrete derivative at $x \in \Z^d$ in the direction
  $\alpha \in A$. $\mathcal{D}\phi$ refers to the vector of discrete
  derivatives $\{\mathcal{D}_{\alpha}\phi\}_{\alpha \in A}$. When
  $\phi \colon \W \to \R$,
  $\mathcal{D}_{\alpha}\phi(\w) = \phi(V^{\alpha}\w) - \phi(\w)$. 
\end{define}

Let the discrete Hamiltonian for \FPP~ be
(see~\Secref{sec:fpp-as-homo-problem} for the motivation)
\begin{equation}
  \mathcal{H}(\phi,p,x,\w) = \sup_{\alpha \in A} \left\{ \frac{-\mathcal{D}_{\alpha}\phi(x,\w) - p\cdot \alpha}{\tau(x,\alpha,\w)} \right\}.
  \label{eq:discrete-hamiltonian-1}
\end{equation}
Define the set of Lipschitz functions with stationary, mean-zero
derivatives:
\begin{equation}
  S:= \left\{ 
    \phi:\Z^d \times \Omega \to \R ~\colon
  \begin{split}  
      & \mathcal{D}\phi(x + z,\w) = \mathcal{D}\phi(x,V^z\w), ~\forall x,z \in \Z^d, \\
      & \Norm{\mathcal{D}_{\alpha}\phi(0)}{\infty} < \infty, \,\E[\mathcal{D}_{\alpha}\phi(0)] = 0 ~\forall~ \alpha \in A .\\
    \end{split}
  \label{eq:discrete-set-S-for-variationalf-formula}
\right\}.
\end{equation}
\begin{theorem}\label{thm:discrete-variational-formula-for-H-bar}
  The effective Hamiltonian $\Heff(p)$ is given by the variational
  formula
  \begin{equation*}
    \Heff(p) = \inf_{\phi \in S\strut} \esssup_{\w \in \W\vphantom{^d}} \sup_{x \in \Z^d} \mathcal{H}(\phi,p,x,\w) . 
  \end{equation*}
\end{theorem}
\begin{remark}
  The supremum over $x$ can be omitted in the formula. This is due to
  translation invariance and ergodicity.
  \label{rem:the-sup-over-x-can-be-dropped}
\end{remark}

Next, we investigate some properties of the minimizers of the formula.
\begin{cor}\label{cor:inf-sup-inequality-for-limiting-Hamiltonian}
  For each
  $\phi \in S$~\Eqref{eq:discrete-set-S-for-variationalf-formula},
  \begin{equation*}
    \inf_{x \in \Z^d} \mathcal{H}(\phi,p,x,\w) \leq \Heff(p) \leq \sup_{x \in \Z^d} \mathcal{H}(\phi,p,x,\w) \quad \almostsurely
  \end{equation*}
\end{cor}

\begin{define}[Discrete corrector]
  If $\phi \in S$ is such that
  \[
    \mathcal{H}(\phi,p,x,\w) = C \quad \almostsurely
  \]
  for some constant $C$, $\phi$ is called a corrector for the
  variational formula.
  \label{def:discrete-corrector}
\end{define}
If $\phi$ is a corrector,
then~\Corref{cor:inf-sup-inequality-for-limiting-Hamiltonian} tells us
that it's a minimizer of the variational formula. This definition is
consistent with the definition of corrector in continuum
homogenization
theory~\citep{lions_homogenization_1987,lions_correctors_2003}; i.e.,
it's a function that solves the discrete cell problem
(see~\Secref{sec:outline-variational-formula}).

Correctors are useful minimizers to have since they appear as
first-order corrections in the multiscale expansion in
homogenization. In~\FPP, their properties are connected to the
behavior of infinite geodesics and Busemann functions
(see~\Secref{sec:comments-busemann-functions}). Although correctors
always exist when the medium is periodic, it's known that they don't always
exist in general stationary ergodic media~\citep{lions_correctors_2003}. The
compactness argument in~\Lemref{lem:compactness-argument-for-lower-bound} tells
us that minimizers of the formula exist, but sheds no light on whether or not
they're correctors. This inspired us to construct an explicit algorithm to
produce minimizers of the formula, and see if it would produce correctors.
However, we were only able to prove our theorem in a simplified, but reasonably
nontrivial scenario: we assume that the generating translation operators are
the same in all the directions; i.e.,
\begin{equation}
  V^{e_1} = \cdots = V^{e_d} = V .
  \label{eq:symmetry-assumption-on-medium}
\end{equation}
This means that for each $\w$ the function $\tau(\,\cdot\,,\cdot\,,\w)$ is
constant along the hyperplanes
$\{ x \in \Z^d \colon \sum_{i=1}^d x_i = z \}$ for each $z \in
\Z$.
The set $S$ in~\Eqref{eq:discrete-set-S-for-variationalf-formula} is
tremendously simplified by this assumption.

Redefine the discrete Hamiltonian for $t \in \R$, $p \in \R^d$ to be
\begin{equation*}
  \mathcal{H}_{\rm sym}(t,p,\w) := \sup_{\alpha \in A^+} \frac{|t + p\cdot\alpha|}{\tau(0,\alpha,\w)} . 
\end{equation*}
\begin{prop}\label{prop:variational-formula-in-symmetric-situation}
  Assuming~\Eqref{eq:symmetry-assumption-on-medium}, the variational
  formula becomes
  \begin{equation}
    \Heff(p) = \inf_{f \in F} \esssup_w \mathcal{H}_{\rm sym}(f(\w),p,\w),
  \end{equation}
  where
  \begin{equation}\label{eq:set-S-under-symmetry-assumption}
    F := \left\{ f:\W \to \R ,~ \E[f] = 0,\, \Norm{f}{\infty} < \infty \right\}.
  \end{equation}
\end{prop}

The algorithm is a map $\mathcal{I}:F \to F$ defined in~\Secref{sec:explicit-algorithm-to-produce-a-minimizer}.
\begin{theorem}  \label{thm:algorithm-convergence}
  Let $\{f_n\}_{n=0}^{\infty}$ be the sequence obtained by iterating
  the algorithm on an initial point $f_0 \in F$. Let $d_n =
  \esssup_{\W} \mathcal{H}_{\rm sym}(f_n(\w),p,\w) - \E[ \mathcal{H}_{\rm sym}]$. There are three
  possibilities for the algorithm:
\begin{enumerate}
  \item If it terminates in a finite number of steps with $d = 0$, we have a minimizer that's a corrector.
  \item If it terminates in a finite number of steps with $d > 0$, we
    have a minimizer that's not a corrector.
  \item If it does not terminate, we produce a corrector in the
    limit. That is, $f_n \to f_{\infty}$ in measure and
    $\mathcal{H}_{\rm sym}(f_{\infty},p,\w) = \Heff(p)~\almostsurely$.
  \end{enumerate}
\end{theorem}

We give examples where the algorithm produces minimizers that are
correctors or noncorrectors, depending on the initial
point $f_0$.  See~\Secref{sec:explicit-algorithm-to-produce-a-minimizer} for
more discussion and proofs.

\section{Discussion}\label{sec:some-comments-on-the-setup}
\subsection{Directed~First-Passage Percolation and Other Problems}
We've formulated the problem so that it applies to
\begin{itemize}
\item first-passage percolation on the undirected nearest-neighbor
  graph of $\Z^d$, and
\item site first-passage percolation (weights are on the vertices of
  $\Z^d$) if
  \[
    \tau(x, \alpha) = \tau(x) \quad \forall x \in \Z^d,\ \alpha \in A.
  \]
\end{itemize}
These are by no means the most general problems that come under this
framework. Specializing to nearest-neighbor first-passage percolation
has mostly been a matter of convenience.

If $A = \{e_1,\ldots,e_d\}$ and we consider $\T(0,x)$, we get directed
first-passage percolation. Versions of the theorems
in~\Secref{sec:main-results} do indeed hold for such $A$. If $A$ is
enlarged to allow for long-range jumps---and very large jumps are
appropriately penalized---we obtain long-range percolation. We avoid
handling the subtleties of these examples in this paper.

The $(d+1)$-dimensional directed random polymer assigns a random cost to
randomly chosen paths in $\Z^d$. At zero temperature, this corresponds
to last-passage percolation. Variational formulas for polymer models
at zero and finite-temperature with a large class of edge weights and
control directions $A$ have been proved
by~\citet{georgiou_variational_2013}.

\subsection{More General Edge Weights}
Our homogenization theorem
(\Thmref{thm:homogenization-theorem-for-point-to-line-passage-time})
depends on Boivin's theorem
(\Thmref{thm:boivins-stationary-cox-durrett}) on the limit shape, and
the ``uniform'' ergodic theorem (\Thmref{thm:uniform-ergodic-theorem}),
which only require moment assumptions on $\tau(0,\alpha,\w)$. Hence
it is of some interest to remove the boundedness assumption on the
edge weights in~\Eqref{eq:basic-assumption-on-edge-weights}. The key
step that one ought to generalize
is~\Propref{prop:lipschitz-estimate-for-stationary-problem}; we explicitly
use the bounds on the edge weights there. 

The discrete Hamiltonian we considered
in~\Eqref{eq:discrete-hamiltonian-1} has some drawbacks: It would take
some effort  to make sense of it when $\tau(x,c,\w) = 0$. It is of
some interest to include this case, since in i.i.d.~\FPP, the limit shape
and time constant are nontrivial as long as
$\Prob(\tau(x,c,\w) = 0) < p_T$, where $p_T$ is the critical
probability for percolation (a result of Kesten mentioned in the introduction). In
contrast, the Legendre-type convex duality and the Hamiltonian
in~\citet{georgiou_variational_2013} does not suffer from this issue.

\subsection{Other Variational Formulas}
Once we posted our preprint on the
arXiv---it now appears in~\citep{krishnan_variational_2014-1}--- the concurrent
but independent work of Georgiou, Rassoul-Agha and Sepp\"al\"ainen~\citet{georgiou_variational_2013} appeared. Their ideas
originate in the continuum homogenization result
of~\citet{kosygina_stochastic_2006}, and the works
of~\citet{rosenbluth_quenched_2008,rassoul-agha_quenched_2014,rassoul-agha_quenched_2013} on large-deviation principles
for the random walk in a random environment.

It is interesting to note that quite coincidentally, our results and
those of~Georgiou et al.\ \citet{georgiou_variational_2013} almost exactly parallel the
development of stochastic homogenization results in the
continuum. Lions and Souganidis \citet{lions_homogenization_2005} published their
homogenization results in 2005, using the classical cell problem idea
and the viscosity solution framework. Concurrently and independently
in 2006, Kosygina et al.\ \citet{kosygina_stochastic_2006} published their
homogenization result. In contrast
to~\citet{lions_homogenization_2005}, their proof technique has the
flavor of a duality principle and has a minimax theorem at its
core. Both our results and those of~\citet{georgiou_variational_2013}
are discrete versions of~\citet{lions_homogenization_2005}
and~\citet{kosygina_stochastic_2006}, respectively.


\subsection{Busemann Functions and Geodesics}
\label{sec:comments-busemann-functions}
The minimizers of the formula
in~\Thmref{thm:discrete-variational-formula-for-H-bar} are very
closely related to Busemann functions, which were originally
introduced into \FPP\  in~Newman \citet{newman_surface_1995} (although he
didn't call them that). Busemann functions encode information about
infinite geodesics; the asymptotic slope of infinite geodesics and
their coalescence properties are closely related to regularity
properties of the time constant. If correctors exist, they can be used
to construct generalized versions of Busemann
functions. See~\cite{MR2114988,MR2462555,MR3152744,licea_superdiffusivity_1996,MR2115045} for more on geodesics
in~\FPP. See~\cite{georgiou_geodesics_2015,georgiou_stationary_2015} for more on the connection
between the minimizers of similar variational formulas, Busemann
functions, and geodesics.

\section{Outline of Proof}
\label{sec:outline-of-paper}
\subsection{Variational Formula}
\label{sec:outline-variational-formula}
\mbox\newline
\noindent \textbf{Step 1. Homogenization theorem and dual problem.}
Suppose that we have \emph{running costs}
$\lambda:\Z^d \times A \to \R$. Summing $\lambda$ along a path assigns
an action or cost to it.

Given a function $\phi\colon\Z^d \to \R$ (usually called the
\emph{terminal cost}), consider the variational problem
\begin{equation}\begin{split}
  \mu(x,t) = \inf_{\gamma_{x,y}} \Biggl\{&
  \sum_{i=0}^{d(\gamma_{x,y})-1}
  \lambda\bigl(\gamma_{x,y}(i),\gamma_{x,y}(i+1) -
  \gamma_{x,y}(i)\bigr) + \phi(y) : \\[-\jot]
&\Weight(\gamma_{x,y}) \leq t \Biggr\}.
  \label{eq:discrete-finite-time-horizon-problem}
\end{split}
\end{equation}
This is usually called a finite time-horizon problem in control
theory. Although one can prove general homogenization theorems for
general random running costs $\lambda(x,\alpha)$, we will restrict our
attention to the case when $\lambda(x,\alpha) = p\cdot\alpha$.

\begin{define}[Dual problem]\label{def:discrete-cell-problem}
  Given a terminal cost function $\phi \colon \Z^d \to \R$, the \emph{dual
  problem} to~\FPP~is
  \[
    \mu(x,t) = \inf_{\gamma_{x,y}} \left\{ p\cdot (y-x) + \phi(y)
      : \Weight(\gamma_{x,y}) \leq t \right\}.
  \]
\end{define}
In the context of first- and last-passage percolation, $\mu(x,t)$ is
sometimes called the point-to-line passage
time~\citep{georgiou_variational_2013}. But it's not \emph{quite} the
passage time to any line. So we'll prefer to simply call this the dual
problem. There is a convex duality relationship between the $\mu(x,t)$
and the first-passage time $T(x)$ in the limit $t \to \infty$.

\begin{theorem}[Homogenization theorem for the dual problem]\label{thm:homogenization-theorem-for-point-to-line-passage-time}
  Consider the dual problem $\mu(x,t)$ defined
  in~\Defref{def:discrete-cell-problem}. If $\phi \in S$ (see~\Eqref{eq:discrete-set-S-for-variationalf-formula}, then for all $x \in \Z^d$,
  \[
    \lim_{t \to \infty} \frac{\mu(x,t)}{t} = H(p) \quad \almostsurely,
  \]
  where $H(p)$ is the effective Hamiltonian $($the dual norm of the
  time constant $m(x))$.
\end{theorem}

\par\noindent \textbf{Step 2. Homogenization theorem and dual problem.}
A major tool in the theory of HJB equations is the comparison
principle for sub- and super-solutions. It is a consequence of the
following DPP for the dual problem.

\begin{prop}\label{prop:DPP-for-discrete-variational-problem-for-comparison-principle}
  With the convention that $\mu(x,t) = +\infty$ when $t < 0$, the DPP
  for the finite time-horizon problem $\mu(x,t)$ with terminal cost
  $\phi(x)$ and running costs $\lambda(x,\alpha)$ takes the form
  \[ 
  \mu(x,t) =
    \inf_{\alpha \in A} \{ \mu(x+c,t-\tau(x,c)) + \lambda(x,\alpha) \} \land \phi(x).\\
  \]
\end{prop}

The comparison principle that we prove is a simplified version of the
much more general idea for sub- and super-solutions of HJB equations
(see~\citet{bardi_optimal_1997}, for example) that suffices for our
purposes. Propositions~\ref{prop:comparison-principle}
and~\ref{prop:other-inequality-comparison-principle} are the sub- and
supersolution versions respectively. Propositions~\ref{prop:DPP-for-discrete-variational-problem-for-comparison-principle},~\ref{prop:comparison-principle},~\ref{prop:discrete-dpp-stationary-problem} and~\ref{prop:other-inequality-comparison-principle} all hold for general costs
$\lambda(x,\alpha)$ and not just for the special case of
$\lambda(x,\alpha) = p \cdot \alpha$; hence we'll write the
Hamiltonian as $\mathcal{H}(\phi,x)$.

\begin{prop}\label{prop:comparison-principle}
  Consider the finite time-horizon problem
  in~\Eqref{eq:discrete-finite-time-horizon-problem} with terminal
  cost $\phi$. Then,
  \[ 
    \mu(x,t) \geq \phi(x) - t \sup_{x \in \Z^d} \mathcal{H}(\phi,x) \quad \forall x \in \Z^d,~ t \in \R^+.
  \]
\end{prop}
The following upper bound for the effective Hamiltonian follows from a
simple argument using~\Propref{prop:comparison-principle} and~\Thmref{thm:homogenization-theorem-for-point-to-line-passage-time}.
\begin{lemma}\label{lem:upper-bound-for-variational-formula-using-comparison-principle}
  Let $\phi \in S$, where $S$ is defined in~\Eqref{eq:discrete-set-S-for-variationalf-formula}. Then the effective Hamiltonian satisfies
  \[
    H(p) \leq \sup_x \mathcal{H}(\phi,p,x,\w)  \quad\almostsurely.
  \]
\end{lemma}

\par\noindent\textbf{Step 3. Lower bound by constructing a minimizer}
In periodic homogenization, a properly rescaled version of $\mu(x,t)$
converges to a function $u(x)$ (in an appropriate sense). This
function $u(x)$ is called a corrector since it solves the so-called
cell problem, $\mathcal{H}(u,p,x) = H(p)$. From the upper bound
in~\Lemref{lem:upper-bound-for-variational-formula-using-comparison-principle},
it follows that such a function $u(x)$ is a minimizer of the
variational formula. 

The cell problem was introduced systematically into periodic
homogenization by~\cite{MR503330}, and it has since become a mainstay of homogenization theory. It was first used to analyze stochastic HJB
equations by~\cite{lions_homogenization_1987}.

We cannot produce a corrector for the variational formula using
$\mu(x,t)$ in our general stationary-ergodic setting. Nevertheless, a
version of the argument still produces a minimizer.

\begin{lemma}\label{lem:compactness-argument-for-lower-bound} 
    For each $p > 0$, there exists a function $u(x,\w) \in
    S$ such that 
    \[ 
        \sup_x \mathcal{H}(u,p,x,\w) \leq H(p) \quad\almostsurely.  
    \]
    The function $u$ is a minimizer of the variational formula.
\end{lemma}

We cannot work with $\mu(x,t)$ directly since it does not have good
stationarity properties due to the discrete nature of our problem. So
we introduce $\nu_{\e}(x)$, the so-called stationary (or discounted)
version of the dual-problem. By proving (one-half of) an
Abelian-Tauberian theorem, we will relate the limits of $\e
\nu_{\e}(x)$ and $t^{-1} \mu(x,t)$ as $\e \to 0$ and $t \to \infty$.
This will help produce the minimizer of the variational formula. 

\begin{define}[Stationary dual problem]
  For any $\e > 0$, the stationary dual problem is defined as 
  \[
      \nu_{\e}(x) = \inf_{\gamma_x} \sum_{i = 0}^{\infty} e^{-\e W(\gamma_x(0),\gamma_{x}(i))} p \cdot (\gamma_x(i+1) - \gamma_x(i)),
  \]
  \label{def:stationary-dual-problem}
\end{define}

The stationary dual problem has the following discrete DPP.
\begin{prop}
    \begin{equation}
        \nu_{\e}(x) = \inf_{\alpha \in A} \left( \alpha \cdot p + e^{-\e \tau(x,\alpha)} \nu_{\e}(x + \alpha) \right).
        \label{eq:discrete-dpp-stationary-problem}
    \end{equation}
    \label{prop:discrete-dpp-stationary-problem}
\end{prop}

With a little manipulation of the DPP, we may view $\nu_{\e}(x)$ as satisfying a discrete HJB equation in stationary form. 
\begin{prop}\label{prop:approximate-discrete-HJB-for-stationary-problem}
  For all $x \in \Z^d$, there is a constant $C > 0$ (independent of $\e$ and $\w$) such that
  \begin{equation}
      -C \e \leq \e \nu_{\e}(x) + \mathcal{H}(\nu_{\e},p,x) \leq C \e,
      \label{eq:approximate-discrete-HJB-for-stationary-problem}
  \end{equation}
  where $\mathcal{H}$ is the discrete Hamiltonian~\Eqref{eq:discrete-hamiltonian-1}.
\end{prop}

We will take $\e \to 0$
in~\Eqref{eq:approximate-discrete-HJB-for-stationary-problem}. To do
this, we need the following Lipschitz estimate on $\nu_{\e}$.
\begin{prop}[Derivative bound for stationary problem]\label{prop:lipschitz-estimate-for-stationary-problem}
  The stationary problem $\nu_{\e}(x,t)$ has a uniform in $\e$
  Lipschitz bound; i.e., $\exists\, C > 0$ s.t. for all $\e \leq 1$,
 \[
     |\nu_{\e}(x) - \nu_{\e}(y)| \leq  C |p|_{\infty} |x-y|_1 \quad \forall~ x,y \in \Z^d
 \]
\end{prop}
\Propref{prop:lipschitz-estimate-for-stationary-problem} says that
$\mathcal{D}\nu_{\e}(x,\w)$ is uniformly bounded in $\e$, and hence
for any fixed $x$, it has a weak limit point $\mathcal{D}u(x,\w)$ in
$L^2(\W)$. This allows us to take a subsequential limit as $\e \to 0$
in~\eqref{eq:approximate-discrete-HJB-for-stationary-problem}.
However, we have yet to identify the limit of $\e \nu_{\e}$
in~\Eqref{eq:approximate-discrete-HJB-for-stationary-problem}. For our
purposes, it's enough to prove one half of an Abelian-Tauberian
theorem for $\e \nu_{\e}$ and $t^{-1} \mu(x,t)$. 
\begin{prop}\label{prop:one-half-of-abelian-theorem}
\begin{equation}
    \liminf_{n \to \infty} \frac{\mu(x,n)}{n} \leq \liminf_{\e \to 0} \e \nu_{\e}(x)
    \label{eq:one-inequality-in-abelian-theorem}
\end{equation}
\end{prop}
Of course, in our situation, it's clear that we ought to have 
\[
  \lim_{\e \to 0} \e \nu_{\e} = \lim_{t \to \infty} t^{-1} \mu(x,t) = -H(p).
\]
But to complete this proof requires a little bit of extra work that's
not necessary for proving the variational formula; hence we will
content ourselves with citing ~\citet{MR1161156},
and~\citet{MR1614615} who do prove that $\e \nu_{\e}$ and $t^{-1}
\mu(x,t)$ have the same limit in slightly different contexts. 

Proposition~\ref{prop:one-half-of-abelian-theorem} allows us to take a
limit $\e \to 0$
in~\Propref{eq:approximate-discrete-HJB-for-stationary-problem} and
prove~\Lemref{lem:compactness-argument-for-lower-bound}.


subsection{Minimizers of the Formula}
Recall the comparison principle
for $\mu(x,t)$ in~\Propref{prop:comparison-principle}.
An almost identical proof, but with a bunch of inequalities reversed,
gives the following proposition.
\begin{prop}\label{prop:other-inequality-comparison-principle}
  Let $\mu(x,t)$ be the finite time-horizon problem with terminal cost
  $\phi$ and running costs $\lambda$. Then,
  \[
    \mu(x,t) - \max(b \inf_x \mathcal{H}(\phi,x),0) \leq \phi(x) - t
    \inf_{x \in \Z^d} \mathcal{H}(\phi,x) \quad \forall \, x,t,
  \]
  where $b$ is the upper bound for the edge weights
  in~\Eqref{eq:basic-assumption-on-edge-weights}.
\end{prop}
Following the same argument as in the proof of the upper bound
in~\Lemref{lem:upper-bound-for-variational-formula-using-comparison-principle} proves
\Corref{cor:inf-sup-inequality-for-limiting-Hamiltonian}.

Corollary~\ref{cor:inf-sup-inequality-for-limiting-Hamiltonian} tells
us that if a function $u \in S$ satisfies $\mathcal{H}(u,p,x\w) = C$,
it is a minimizer of the variational formula. We wanted to explicitly
produce such a special minimizer using an algorithm; however, we were
unable to construct an algorithm that works in full generality. The 
key simplification for the algorithm is
in~\Propref{prop:simplifying-the-set-of-functions-S}, where the set of
functions $S$ in the variational
formula~\Eqref{eq:discrete-set-S-for-variationalf-formula} is
simplified to $F$. It says that under the symmetry assumption, all
functions in $S$ have derivatives that point \emph{only} in the
$\sum_i e_i$-direction. This one-dimensionalizes the problem, in a
sense.

The algorithm simply does the following: Given any function in
$f_0 \in F$, it tries to produce a function $f_1$ such that the
$\esssup$ in the variational formula is decreased. If the algorithm
fails to reduce the $\esssup$, we prove that we must be at a minimizer
of the formula. The strategy of the algorithm is quite general and
ought to be generalizable when the symmetry assumption is
removed.\footnote{To be taken with a pinch of salt---we tried and ran
  into difficulties.} The algorithm and its proof are
in~\Secref{sec:explicit-algorithm-to-produce-a-minimizer}.

\section{Proof of the Variational Formula}
\label{sec:proofs-related-to-discrete-variational-formula}

\subsection{Step 1: \em Homogenization of the dual problem}
We first
prove~\Thmref{thm:homogenization-theorem-for-point-to-line-passage-time},
which says that the dual problem $\mu(x,t)$ homogenizes to $H(p)$. We need the stationary-ergodic version of the classical Cox-Durrett
theorem~\citep{cox_limit_1981} due to
\cite{boivin_first_1990}. For this, we need the following
definitions.

\begin{define}
  The \emph{Lorenz norm} of a function $f$ is defined as
  \[
    \Norm{f}{d,1} := \int_0^1 f^*(s) s^{(1/d)-1}\, ds
  \]
  where $f^* \colon [0,1] \to \R^+$ is the nonincreasing
  right-continuous function that has the same distribution as $|f|$.
  \label{def:lorentz-norm}
\end{define}

\begin{define}[{Reachable set}]
  For $x \in \Z^d$ and $t \in \R^+$, let
  \begin{equation*}
    R(x,t) := \{ y \in \Z^d : \T(x,y) \leq t \} 
  \end{equation*}
  be the set of sites that can be reached from~$x$ within time~$t$. $\hat{R}(x,t)$ is the fattened up version of $R(x,t)$ defined
  in~\Secref{sec:background-on-time constant}.
  \label{def:reachable-set-definition}
\end{define}

\begin{theorem}[\hspace{-.00001pt}{\cite{boivin_first_1990}}]\label{thm:boivins-stationary-cox-durrett}
  Let $\tau(x, \cdot\,, \w)$ be (totally) stationary-ergodic, and let
  $\tau(0,e_i,\linebreak[1]\w)$ have finite Lorentz norm for each
  $i=1,\ldots,d$. Then for each $\e > 0$, almost surely, there exists
  a $t_0(\w)$ large enough such that
  \[
    \{ x \colon \mu(x) \leq 1 - \e \} \subset t^{-1} \hat{R}(0,t)
    \subset \{ x \colon \mu(x) \leq 1 + \e \} \quad \forall t \geq
    t_0(\w).
  \]
\end{theorem}

We also need an ergodic theorem that is ``uniform in all
directions.'' A version of the theorem is neatly proved
in~\cite{rosenbluth_quenched_2008}.

\begin{theorem}[\hspace{-.00001mm}\cite{rosenbluth_quenched_2008}]\label{thm:uniform-ergodic-theorem}
  Consider a function $\phi \colon \Z^d \times \W$. Suppose
  \begin{itemize}
  \item its derivative $\mathcal{D}\phi$ is stationary-ergodic, and
  \item for each $\alpha \in A$,
    $\mathcal{D}_{\alpha}\phi(\w) \in L^{d+\e}(\W)$ for some $\e > 0$.
  \end{itemize}

  Then
  \[
    \lim_{n \to \infty} \sup_{\substack{|z|_1 \leq n,\\ z \in \Z^d}}
    \frac{\phi(z,\w)}{n} = 0 \quad\almostsurely
  \]  
\end{theorem}
\begin{proof}[Proof
  of~\Thmref{thm:discrete-variational-formula-for-H-bar}]
  Let the terminal cost function $\phi \in S$. Although
  $\mu(x,t,\w)$ is not stationary when $\phi \not\equiv 0$, we might
  as well consider $t^{-1} \mu(0,t,\w)$ since
  \begin{align*}
    \limsup_{t \to \infty} \frac{\mu(x,t,\w)}{t} 
    & = \limsup_{t \to \infty} \inf_{y \in \Z^d} \left\{ p \cdot \frac{y-x}{t} + \frac{\phi(y,\w)}{t},\, T(x,y,\w) \leq t \right\}\\
    & = \limsup_{t \to \infty} \inf_{z \in \Z^d} \biggl\{ p \cdot \frac{z}{t} + \frac{\phi(z,V^x \w)}{t} + \frac{\phi(x,\w) - \phi(0,V^x\w)}{t},\\*
    &= \limsup_{t \to \infty} \inf_{z \in \Z^d} \biggl\{\  T(0,z,V^x\w) \leq t \biggr\}\\*
    & = \limsup_{t \to \infty} \frac{\mu(0,t,V^x\w)}{t}
  \end{align*}
  where we've made the change of variable $y-x \to z$ and used the
  stationarity of $\mathcal{D}\phi$ to write
  $\phi(x+z,\w) = \phi(x,\w) - \phi(0,V^x\w) + \phi(z,V^x\w)$. 
    
  Then,
  \begin{align}
    \limsup_{t \to \infty} \frac{\mu(0,t,\w)}{t}
      & = \limsup_{t \to \infty} \inf_{m \in t^{-1} \Z^d } \left\{ p \cdot m + \frac{\phi(tm)}{t} ,\, T(t m) \leq t \right\} \nonumber, \\
      & = \limsup_{t \to \infty} \inf_{s \in \R^d } \biggl\{ p \cdot s + \frac{\phi([ts])}{t} ,\, T([t s]) \leq t \biggr\} \label{eq:homo-thm-change-of-variable} ,\\
      & = \limsup_{t \to \infty\vphantom{R^d}} \inf_{s \in \R^d } \biggl\{ p \cdot s + \frac{\phi([ts])}{t} ,\,  s \in t^{-1} \hat{R}(0,t), \, |s|_1 \leq a^{-1} + 1 \biggr\} \label{eq:homo-thm-limit-uniform-ergodic-thm} , \\
      & = \inf_{s \in \R^d } \{ p \cdot s ,\,  s \in \{m(s) \leq 1\} \} \label{eq:homo-thm-limit-calc-boivins-thm},\\
      & = -H(p), \nonumber
        \label{eq:limit-calculation-for-cell-problem}
  \end{align}
  where we've
  \begin{itemize}
  \item made a change of variable $m \mapsto s$ and expanded the
    domain to $\R^d$ in~\Eqref{eq:homo-thm-change-of-variable},
  \item restricted the infimum
    in~\Eqref{eq:homo-thm-limit-uniform-ergodic-thm} to the set
    $|s|_1 \leq a^{-1} + 1$ since $T([ts]) \geq a |ts|_1 - a$,
  \item bounded $\phi([ts])/t$
    using~\Thmref{thm:uniform-ergodic-theorem} in
    step~\Eqref{eq:homo-thm-limit-uniform-ergodic-thm}, and
  \item used the limit shape theorem
    (\Thmref{thm:boivins-stationary-cox-durrett})
    in~\Eqref{eq:homo-thm-limit-calc-boivins-thm}.
  \end{itemize}
  A similar calculation for the liminf completes the proof. 

\end{proof}

\subsection{Step 2. Comparison principle and upper bound}

First we prove the dynamic programming principle
in~\Propref{prop:DPP-for-discrete-variational-problem-for-comparison-principle}.
\begin{proof}[Proof
    of~\Propref{prop:DPP-for-discrete-variational-problem-for-comparison-principle}]
    If $t < \min_{\alpha \in A} \tau(x,\alpha)$, then no neighbor of
    $x$ can be reached, and $\mu(x,t) = \phi(x)$.  So, assume that at least one neighbor $x + \alpha$ can be reached. Hence,
  \[
    \mu(x,t) \leq \{ \mu(x + \alpha, t - \tau(x,\alpha)) + \lambda(x,\alpha) \} \land \phi(x).
  \]
  Since $\mu(x,t) = + \infty$ for $t < 0$, we may write
  \[
    \mu(x,t) \leq \inf_{\alpha \in A} \{ \mu(x +
    \alpha,t-\tau(x,\alpha)) + \lambda(x,\alpha) \} \land \phi(x).
  \]
  For the opposite inequality, for any $\e > 0$, there is a path
  $\gamma$ such that
  \begin{align*}
      \mu(x,t) & \geq \sum_{i=0}^{d(\gamma_{x,y})-1} \lambda(\gamma_{x,y}(i),\gamma_{x,y}(i+1)-\gamma_{x,y}(i)) + \phi(y) - \e, \\*[-3\jot]
      & \geq \bigl\{\lambda(x,\gamma_{x,y}(2)-\gamma_{x,y}(1)) +
      \mu(\gamma_{x,y}(2),\vphantom{\sum^{d(\gamma_{x,y})-1}}\\[-3\jot]
&\geq \bigl\{\ t-\tau(x,\gamma_{x,y}(2)-\gamma_{x,y}(1))) \bigr\} \land \phi(x) - \e, \vphantom{\sum^{d(\gamma_{x,y})-1}}\\*[-3\jot]
      & \geq \inf_{\alpha \in A} \{ \mu(x,t-\tau(x,\alpha)) +
      \lambda(x,\alpha) \} \land \phi(x) - \e.\vphantom{\sum^{d(\gamma_{x,y})-1}}
  \qedhere\end{align*}
\end{proof}

\begin{proof}[Proof of~\Propref{prop:comparison-principle}]
  We will drop the reference to $p$ in $\mathcal{H}(\phi,p,x)$ and not
  specialize to $\lambda(x,\alpha) = p \cdot \alpha$ in the following. Let 
  \begin{equation*}
    \zeta(x,t) = \phi(x) - t \sup_x \mathcal{H}(\phi,x).
  \end{equation*}
  From the form of $\mathcal{H}$
  (see~\Eqref{eq:algorithm-hsym-form-when-tau-undirected}) it follows
  that $\sup_x \mathcal{H}(\phi,x) \geq 0$ and hence for all
  $ t > 0,~\zeta(x,t) \leq \phi(x)$.

  The proof proceeds by induction on $n$ for $t \in [(n-1)a,na)$. For
  any $n \geq 1$, assume
  \[
    \zeta(x,t) \leq \mu(x,t) \forall x\in\Z^d ,~t < (n-1)a.
  \]
  Let $C = \sup_{x \in \Z^d} \mathcal{H}(\phi,x)$. Then,
  \begin{equation}
    \begin{split}
     & \sup_{\alpha \in A} \left\{ \frac{\zeta(x,t) - \zeta(x+\alpha,t-\tau(x,\alpha)) - \lambda(x,\alpha)}{\tau(x,\alpha)} \right\} \\
     & = \sup_{\alpha \in A} \left\{ \frac{-(\phi(x+\alpha)-\phi(x)) - \lambda(x,\alpha)}{\tau(x,\alpha)} \right\} - C \\
     & = \mathcal{H}(\phi,x) - C \leq 0.
    \end{split}
    \label{eq:comparison-principle-sub-super-solution-display}
  \end{equation}
  Since $\tau(x,\alpha)$ is positive, this implies
  \[
    \zeta(x,t) \leq \inf_{\alpha \in A} \left\{ \zeta(x+\alpha,t-\tau(x,\alpha)) +
      \lambda(x,\alpha) \right\} \land \phi(x).
 \]
Let $t \in [(n-1)a,na)$. There are two cases: suppose first that 
  $t \geq \min_\alpha \tau(x,\alpha)$. Since $\tau(x,\alpha) \geq a$, 
  $t - \tau(x,\alpha) <  (n-1)a$, we may use the induction hypothesis
  to get for all $x \in \Z^d$ 
  \[
    \zeta(x,t) \leq \inf_{\alpha \in A} \left\{ \mu(x+\alpha,t-\tau(x,\alpha)) +
     \lambda(x,\alpha) \right\} \land \phi(x) . 
  \]
  When $t < \min_\alpha \tau(x,\alpha)$, we have 
  $\zeta(x,t) \leq \phi(x) = \mu(x,t)$. Combining the two cases and
  using the DPP
  in~\Propref{prop:DPP-for-discrete-variational-problem-for-comparison-principle}
  completes the induction step. For $n=1$, we have
  $t < a \leq \min_\alpha \tau(x,\alpha)$, and this falls into the previously
  considered case.
\end{proof}

\begin{proof}[Proof of~\Lemref{lem:upper-bound-for-variational-formula-using-comparison-principle}]
  Let $\phi \in S$, where $S$ is defined
  in~\Eqref{eq:discrete-set-S-for-variationalf-formula}. Then
\[
 \sup_x \mathcal{H}(\phi,p,x,\w) < \infty \almostsurely,
\] and the
  comparison principle in~\Propref{prop:comparison-principle} gives
  \[
    \phi(x) - t \sup_x \mathcal{H}(\phi,p,x,\w) \leq \mu(x,t,\w)
    \quad\forall x \in \Z^d \ \almostsurely
  \]
  Divide the inequality by $t$, take a limit as $t\to\infty$,
  use~\Thmref{thm:homogenization-theorem-for-point-to-line-passage-time},
  and rearrange to get
  \[
    \Heff(p) \leq \sup_{x \in \Z^d} \mathcal{H}(\phi,p,x,\w) \quad
    \almostsurely
  \qedhere\]
\end{proof}

\subsection{Lower Bound}
The main idea is to relate the limits of the dual problem $t^{-1} \mu(x,t)$ and its stationary version $\e \nu_{\e}$ as $t \to \infty$ and $\e \to 0$ respectively. It's easier to do so if we modify the definition of $\nu_{\e}$.

\begin{define}[Time parametrization of a path]
    Let $\gamma = (v_i)_{i=1}^{\infty}$ be a path and let $i_k = \lceil W(v_0,v_k)/a \rceil$ for $k \in \Z^+$. Let $y_{\gamma} \colon \Z^+ \to \Z^d$ be the time-parametrization of the path defined by:
    \[
        y_{\gamma}(i) = v_{k}    \quad i_{k} \leq i < i_{k+1}, \quad k \in \Z^+.
    \]
\end{define} 
The time-parametrized path satisfies $W(y_{\gamma}(0),y_{\gamma}(i)) \leq ia$. It jumps from $y_{\gamma}(i)=v_{k}$ to its neighbor $v_{k+1}$ at the first $i$ when $v_{k+1}$ can be reached; that is, 
\[
    y_{\gamma}(i_{k+1}) - y_{\gamma}(i_{k}) = v_{k+1} - v_k \quad k \in \Z^+.
\]
Then, the dual problem maybe defined for all $n \in \Z^+$ as
\[
    \mu(x,n a) = \inf_{\gamma_x} \{ p \cdot y_{\gamma_x}(n) \}.
\]

\begin{define}[Time-parametrized stationary problem]
    For any $\e > 0$, let
    \begin{align*}
        \overline{\nu}_{\e}(x) 
        & = \inf_{\gamma_x} \sum_{i = 0}^{\infty} e^{-\e a i}p \cdot (y_{\gamma_x}(i+1) - y_{\gamma_x}(i))
    \end{align*}
\end{define}
The time parametrization of the path lets us use summation by parts in~\Eqref{eq:summation-by-parts-for-tauberian}, which is a key ingredient in the proof of~\Propref{prop:one-half-of-abelian-theorem}. From the definition, it follows that
\[
    \overline{\nu}_{\e}(x) = \inf_{\gamma_x} \sum_{k = 0}^{\infty} e^{-\e a (i_{k+1}-1)} p \cdot (y_{\gamma_x}(i_{k+1}) - y_{\gamma_x}(i_k)).
\]
It's clear that $\overline{\nu}_{\e}$ and $\nu_{\e}$ must be closely related. In fact,
\begin{prop}
    For all $\e \leq 1$, there is a constant $C$ such that
    \[
        |\overline{\nu}_{\e}(x) - \nu_{\e}(x)| \leq C .
    \]
    \label{prop:time-discretized-stationary-problem}
\end{prop}
\begin{proof}
    Fix a path $\gamma = (v_i)_{i=0}^{\infty}$, and let $y_{\gamma}$ be its time-parametrized version. We have for a mutable constant $C$,
    \begin{align*}
        & \left| \sum_{k = 0}^{\infty} e^{-\e a (i_{k+1} - 1)} p \cdot (y_{\gamma}(i_k+1) - y_{\gamma}(i_k)) - e^{-\e W(v_0,v_k)} p \cdot (v_{k+1} - v_k) \right| \\
        & \qquad \leq \sum_{k = 0}^{\infty} e^{-\e W(v_0,v_k)}(e^{\e (a i_{k+1} - W(v_0,v_k) - a)} - 1) \Norm{p}{\infty} |v_{k+1} - v_k|\\
        & \qquad \leq C_1 \e \sum_{k = 0}^{\infty} e^{-\e a k} \leq C,\\
    \end{align*}
    using $a i_{k+1} \leq W(v_0,v_{k+1}) + a$.
\end{proof}

Next, we will prove the inequality~\Eqref{eq:one-inequality-in-abelian-theorem} in~\Propref{prop:one-half-of-abelian-theorem} for $\e \overline{\nu}_{\e}$ and $t^{-1}\mu(x,t)$. Combining this with~\Propref{prop:time-discretized-stationary-problem} will complete the proof of~\Propref{prop:one-half-of-abelian-theorem}.

Define the $n$ step cost of a time-parametrized path $\gamma$ 
\[
    f_n(\gamma) = \sum_{i=0}^{n-1} p \cdot (y_{\gamma_x}(i+1) - y_{\gamma_x}(i)) =: \sum_{i=0}^{n-1} \lambda_i
\]
and the discounted (in the optimal-control terminology) or Abel sum
\[
    f_{\e}(\gamma) = \sum_{i=0}^{\infty} e^{-\e i} \lambda_i.
\]
Then,
\begin{align}
    \mu(x,n a) & = \inf_{\gamma_x} f_n(\gamma_x), \\
    \overline{\nu}_{\e a^{-1}}(x) & = \inf_{\gamma_x} f_{\e}(\gamma_x).
\end{align}

The following is a summation by parts formula for the discounted sum. When the path $\gamma$ is fixed and understood, we will drop it from the notation in $f_n$ and $f_{\e}$ in the following. For a fixed path $\gamma$, clearly $\lambda_m = f_{m+1} - f_m$. Then for $N \in \Z^+ \cup \{+\infty\}$,
\begin{align}
    \sum_{i=0}^{N} \lambda_i e^{-\e i} 
    & = \sum_{i=0}^{N} (f_{i+1} - f_i) e^{-\e i}, \nonumber\\
    & = e^{-\e (N+1)} f_{N+1} + (e^{\e} - 1) \sum_{i=0}^{N} \frac{f_i}{i} i e^{-\e i} \label{eq:summation-by-parts-for-tauberian} .
\end{align}

Let $C_{\lambda} = \max_i |\lambda_i| < \infty $. For fixed $N$ and small enough $\e$, we have
\begin{align*}
    \sum_{i=0}^{N-1} \frac{|f_i|}{i}  i e^{-\e i} 
    & \leq C_{\lambda} N^2,\\
    \sum_{i=0}^{\infty} i e^{-\e i} 
    & = \frac{1}{\e^2}(1 + O(\e)),\\
    e^{\e} - 1 & = \e + O(\e^2).
\end{align*}
The following proof is taken from~\cite{MR1161156}. 
\begin{proof}[Proof of~\Propref{prop:one-half-of-abelian-theorem}]
    Suppose to the contrary that there exists $\delta$ and $N$ such that $(a n)^{-1} \mu(x,n a) \geq \liminf (\e a^{-1}) \overline{\nu}_{\e a^{-1} }(x) + 3\delta a^{-1}$ for all $n \geq N$. Then, for any $\e_0$, there must exist $\e \leq \e_0$ such that for all $\gamma_x$, we must have $n^{-1} f_n(\gamma_x) \geq \e \overline{\nu}_{\e a^{-1}}(x) + 2\delta$. However (dropping $x$ and $\gamma_x$ from the following display for clarity),
    \begin{align*}
        \e f_{\e} 
        & = \e \sum_{i=0}^{\infty} \lambda_i e^{-\e i} \\
        & = \e(e^{\e} - 1) \sum_{i=0}^{N-1} \frac{f_i}{i} i e^{-\e i} + \e (e^{\e} - 1) \sum_{i=N}^{\infty} \frac{f_i}{i} i e^{-\e i} \\
        & \geq - C N^2 (\e^2 + O(\e^3)) + \e^2 (\e \overline{\nu}_{\e} + 2\delta) \sum_{i=N}^{\infty} i e^{-\e i} - C \e \\
        & \geq \e \overline{\nu}_{\e a^{-1}} + \delta,
    \end{align*}
    when $\e_0$ is small enough. We've also used the fact that $\e \overline{\nu}_{\e a^{-1}}$ is bounded in the above display; see~\Eqref{eq:simple-bounds-on-v-epsilon}. Taking an inf over all $\gamma_x$, this results in a contradiction. 

    Since Prop~\ref{prop:time-discretized-stationary-problem} says that $|\nu_{\e} - \overline{\nu}_{\e}| \leq C$ for all small enough $\e$, this completes the proof.
\end{proof}

As an immediate consequence of~\Propref{prop:one-half-of-abelian-theorem}, we get that for each fixed $x$,
\begin{equation}
    -H(p) \leq \liminf_{\e \to 0} \e \nu_{\e}(x).
    \label{eq:liminf-of-stationary-problem}
\end{equation}

The proof of the DPP for $\nu_{\e}$ in~\Propref{prop:discrete-dpp-stationary-problem} is standard and nearly identical to the proof of the DPP for $\mu(x,t)$ (\Propref{prop:DPP-for-discrete-variational-problem-for-comparison-principle}); hence we will omit it. Two immediate consequences of the DPP are the discrete HJB equation it approximately satisfies in~\Propref{prop:approximate-discrete-HJB-for-stationary-problem}, and the uniform-in-$\e$ derivative bound in Proposition~\ref{prop:lipschitz-estimate-for-stationary-problem}. 

\begin{proof}[Proof of~\Propref{prop:lipschitz-estimate-for-stationary-problem}]
    From the variational definition of $\nu_{\e}$ in~\Defref{def:stationary-dual-problem}, it's easy to see that
  \begin{equation}
      -\frac{|p|_{\infty}}{\e a} \leq \nu_{\e}(x) \leq -\frac{|p|_{\infty}}{\e b} \quad \forall~ x \in \Z^d
      \label{eq:simple-bounds-on-v-epsilon}
  \end{equation}
  where $b$ and $a$ are the upper and lower bounds on $\tau(x,\alpha)$ in~\Eqref{eq:basic-assumption-on-edge-weights}. 

  Using the DPP in~\Eqref{eq:discrete-dpp-stationary-problem}, we get for fixed $\alpha \in A$,
    \[
        \begin{split}
            \nu_{\e}(x)  & \leq p \cdot \alpha + (1 - \e \tau(x,\alpha)) \nu_{\e}(x + \alpha),\\
            -\mathcal{D}\nu_{\e}(x,\alpha) & \leq C,
        \end{split}
    \]
    where we've used the bound in~\Eqref{eq:simple-bounds-on-v-epsilon}, and the boundedness of $\tau(x,\alpha)$. Repeating the argument for $\nu_{\e}(x+\alpha)$ completes the proof. 
\end{proof}


\begin{proof}[Proof of~\Propref{prop:approximate-discrete-HJB-for-stationary-problem}]
 Taylor expand the exponential in the DPP in~\Eqref{eq:discrete-dpp-stationary-problem}, and use the bound on $\nu_{\e}$ in~\Eqref{eq:simple-bounds-on-v-epsilon} to get
 \begin{align*}
    -C\e & \leq\nu_{\e}(x) + \sup_{\alpha \in A} \big( -\alpha \cdot p - (1-\e \tau(x,\alpha)) \nu_{\e}(x+\alpha) \big) \leq C \e ,\\
    -C\e & \leq \sup_{\alpha \in A} \big( -\alpha \cdot p - \mathcal{D}\nu_{\e}(x,\alpha) + \e \tau(x,\alpha) \mathcal{D}\nu_{\e} + \e \tau(x,\alpha) \nu_{\e}(x) \big) \leq C\e .
 \end{align*}
 Using the bounds on $\tau(x,\alpha$ and the Lipschitz estimate on $\nu_{\e}$ in the above, we get
 \begin{align*}
   -C\e & \leq \e \nu_{\e}(x) + \sup_{\alpha \in A} \left( \frac{-\alpha \cdot p - \mathcal{D}\nu_{\e}(x,\alpha)}{\tau(x,\alpha)} \right) \leq C\e.  
 \end{align*}
\end{proof}

Finally, we prove~\Lemref{lem:compactness-argument-for-lower-bound}.
\begin{proof}[Proof of~\Lemref{lem:compactness-argument-for-lower-bound}]
Using the discrete HJB equation for $\nu_{\e}$ from~\Propref{prop:approximate-discrete-HJB-for-stationary-problem}, we get
\[ 
  \e \nu_{\e}(x,\w) + \mathcal{H}(\nu_{\e},p,x,\w) \leq C \e  \quad \forall~ x \in \Z^d. 
\]
Letting $\hat{\nu}_{\e}(x,\w) = \nu_{\e}(x,\w) - \nu_{\e}(0,\w)$, we get 
\[ 
   \e \nu_{\e}(0,\w) + \e \hat{\nu}_{\e}(x,\w) + \mathcal{H}(\hat{\nu}_{\e},p,x,\w) \leq C\e. 
\]
Using the definition of the discrete Hamiltonian, we get for each $\alpha \in A$,
\begin{equation}
  \e \nu_{\e}(0,\w) + \e \hat{\nu}_{\e}(x,\w) + \frac{-p\cdot\alpha-\mathcal{D}\hat{\nu}_{\e}(x,\alpha,\w)}{\tau(x,\alpha,\w)} \leq C\e. 
  \label{eq:step-in-lower-bound-of-variational-formula}
\end{equation}
Since $\hat{\nu}_{\e}$ is normalized to zero at the origin, and inherits the uniform-in-$\e$ Lipschitz estimate on $\nu_e$ in~\Propref{prop:lipschitz-estimate-for-stationary-problem}, we have
\[ 
  C = \sup_{\e} \left\{ \Norm{\hat{\nu}_{\e}(y,\w)(1+|y|)^{-1}}{\infty} + \Norm{\hat{\nu}_{\e}}{Lip} \right\} < \infty .
\]
As a consequence of its definition, $\nu_{\e}(x,\w)$ is stationary. Therefore its derivative is stationary and mean-zero. Hence $\mathcal{D}\hat{\nu}_{\e}$ is stationary and mean-zero, while $\hat{\nu}_{\e}$ itself is not stationary, in general.

%
Let $\psi(\alpha,\w)$ be an $L^2(\W)$ weak limit of $\mathcal{D}\hat{\nu}_{\e}(0,\alpha,\w)$ (as $\e \to 0$) for each $\alpha \in \{e_1,\ldots,e_d\}$. It's easy to check that $\psi(V^x\w)$ is the weak limit of $\mathcal{D}\hat{\nu}_{\e}(V^x \w)$ for all $x \in \Z^d$. Hence, with a slight abuse of notation, we use the translation group to define $\psi(x,\alpha,\w) = \psi(\alpha,V^x\w)$. In fact, $\psi(\alpha,x,\w)$ comes from discrete differentiating a function; we show this next.

For any $\e$ and any fixed loop $\gamma_{xx}$, $\mathcal{D}\hat{\nu_{\e}}$ sums to zero over the loop. Since this is a linear condition, it is preserved under the weak-limit, and hence $\psi(x,\alpha,\w)$ also sums to zero almost surely over the loop. Since there are only a countable number of loops and a countable number of points, $\psi$ sums over all loops at every location to zero almost surely. Hence, there is a function $u(x,\w)$ such that $\mathcal{D}u(x,\alpha,\w) = \psi(x,\alpha,\w)$. 

First, pass to a subsequence such that $\mathcal{D}\hat{\nu}_{\e} \to \mathcal{D}u$ weakly in $L^2$. We know from~\Eqref{eq:liminf-of-stationary-problem} that $\liminf \e \nu_{\e}(0) \geq -H(p)$ almost surely along this sequence. Taking a liminf as $\e \to 0$ in~\Eqref{eq:step-in-lower-bound-of-variational-formula}, we have for each fixed $x \AND \alpha$ and any nonnegative bounded function $g(\w) \in L^2(\W)$,
\[ 
  \int g(\w) \left(\frac{ - p\cdot\alpha-\mathcal{D}u(x,\alpha,\w)}{\tau(x,\alpha,\w)} \right) \Prob(d\w) \leq \Heff(p) \int g(\w) \Prob(d\w). 
\]
This tells us that the inequality must hold almost surely. We can take a supremum over $\alpha \in A$ and then over $x \in \Z^d$ to get
\[ 
  \sup_x \mathcal{H}(u,p,x,\w) \leq \Heff(p) \quad \almostsurely.
\]
\end{proof}

\section{Proof of the Algorithm}
\label{sec:explicit-algorithm-to-produce-a-minimizer}
We first complete the proof of the comparison principle for
supersolutions
in Proposition~\ref{prop:other-inequality-comparison-principle}.

\begin{proof}[Proof of~\Propref{prop:other-inequality-comparison-principle}]
  Let $C = \inf_{x \in \Z^d} \mathcal{H}(\phi,x) > -\infty$ without
  loss of generality. Let
  \begin{equation*}
    \zeta(x,t) = \phi(x) - t C.
  \end{equation*}
  Again the proof is by induction on $n$. For any $n > 1$, assume for
  some finite constant $M$,
  \[
    \zeta(x,t) \geq \mu(x,t) - M \quad \forall  x \in Z^d ,~ t <
    (n-1)a.
  \]


  Following~\Eqref{eq:comparison-principle-sub-super-solution-display},
  we get
  \[
    \zeta(x,t) \geq \inf_{\alpha \in A} \{ \zeta(x,t - \tau(x,\alpha))
    + \lambda(x,\alpha) \}.
  \]
  Again, there are two cases: Suppose first that
  $t \geq \min_c \tau(x,c)$; we may use the induction hypothesis to
  get
  \[
    \begin{split}
      \zeta(x,t) \geq \inf_{c \in A} \left\{ \mu(x+c,t-\tau(x,c)) +
        \lambda(x,c) \right\} \land \phi(x) - M \quad \forall  x \in
      \Z^d .
    \end{split}
  \]
  When $t < \min_c \tau(x,c) \leq b$, we have
  $\zeta(x,t) = \phi(x) - t C \geq \mu(x,t) - \max(b C, 0) $. This
  completes the induction step. The $n=1$ case is the same as the
  previously considered case and gives $M = \max(bC,0)$.
\end{proof}

Using~\Propref{prop:other-inequality-comparison-principle} and
following the argument in the upper bound
in~Lem\-ma~\ref{lem:upper-bound-for-variational-formula-using-comparison-principle}
gives~\Corref{cor:inf-sup-inequality-for-limiting-Hamiltonian}

Let $A^+ = \{e_1, \ldots, e_d\}$. Let
$\tilde{\tau}\colon A^+ \times \W \to \R$ be a function representing
the edge weight at the origin. For example, it could consist of $d$
i.i.d.~edge weights, one for each direction. Let
$x=(x_1,\ldots,x_d) \in \Z^d$. Under the symmetry
assumption~\Eqref{eq:symmetry-assumption-on-medium}, the edge weight
function is given by
\[
  \tau(x,\alpha,\w) = \tilde{\tau}\left(\alpha,V_{e_1}^{x_1}\cdots
    V_{e_d}^{x_d}\w \right) = \tilde{\tau}\bigl(\alpha,V^{\sum_{i=1}^d
      x_i}\w \bigr).
\]
This means that $\tau(\,\cdot\,, \cdot \,,\w)$ is constant along the
hyperplanes $\{ x \in \Z^d : \sum_{i=1}^d x_i = z \}$ for each
$z \in \Z \AND \w \in \W$. Despite this, the time constant is not
\emph{that} obvious, although one ought to be able to calculate it.

\begin{prop}  \label{prop:simplifying-the-set-of-functions-S}
  If $\phi \in S$ and the symmetry
  condition~\Eqref{eq:symmetry-assumption-on-medium} holds, the
  derivative points in the $\sum_i e_i$-direction; i.e.,
  \[
    \mathcal{D}_{\alpha}\phi(x,\w) = \mathcal{D}_{e_1}\phi(x,\w) \quad
    \forall \alpha \in A \ \text{and}\ \forall x \in \Z^d ~\almostsurely
  \]
\end{prop}
\begin{proof}
  The derivative of $\phi$ sums to $0$ over any discrete loop in
  $\Z^d$: for any $i \neq j \in \{1,\ldots,d\}$
  \begin{equation}\begin{split}
    \mathcal{D}_{e_i}\phi(x,\w) + \mathcal{D}_{e_j}\phi(x+e_i,\w)  + \mathcal{D}_{-e_i}\phi(x + e_j + e_i,\w)&\\*
{}     +
    \mathcal{D}_{-e_j}\phi(x+e_j,\w) &= 0.
  \end{split}
\end{equation}
  Since the derivative is stationary,
  $\mathcal{D}_{\alpha}\phi(x,\w) =
  -\mathcal{D}_{-\alpha}\phi(x+\alpha,\w)$ and $V^{e_i} = V^{e_j}$, we have
  \[
    \begin{split}
      \mathcal{D}_{e_i}\phi(x,\w) - \mathcal{D}_{e_j}\phi(x,\w)
      & = \mathcal{D}_{e_i}\phi(x,V\w) - \mathcal{D}_{e_j}\phi(x,V\w).
    \end{split}
  \]
  This means that
  $\mathcal{D}_{e_i}\phi(x,\w) - \mathcal{D}_{e_j}\phi(x,\w)$ is
  invariant under $V$, and hence it must be a constant almost
  surely. Since it also has zero mean, it follows that
  \[
    \mathcal{D}_{\alpha}\phi(x,\w) = \mathcal{D}_{e_1}\phi(x,\w) \quad
    \forall \alpha \in A^+ \ \almostsurely
  \qedhere\]
\end{proof}

Recall the discrete Hamiltonian under the symmetry assumption,
\begin{equation*}
  \mathcal{H}_{\rm sym}(t,p,\w) := \sup_{\alpha \in A^+} \frac{|t + p\cdot\alpha|}{\tau(0,\alpha,\w)} . 
\end{equation*}

\begin{proof}[Proof of~\Propref{prop:variational-formula-in-symmetric-situation}]
  For each $\phi \in S$, let $f(\w) = -
  \mathcal{D}\phi(0,e_1,\w)$. For each $\alpha \in A^+,\,x \in \Z^d$,
  we get
  \begin{multline}
    \max\left\{ \frac{-p\cdot\alpha + \phi(x+\alpha) - \phi(x)}{\tau(x,\alpha)}, \frac{-p\cdot(-\alpha) + \phi(x) - \phi(x+\alpha)}{\tau(x+\alpha,-\alpha)} \right\} 
    = \\*\frac{\left|-p\cdot\alpha +
      \mathcal{D}_{\alpha}\phi(x)\right|}{\tau(x,\alpha)},
    \label{eq:algorithm-hsym-form-when-tau-undirected}
  \end{multline}
  by using~\Eqref{eq:algorithm-undirected-edge-weight-assumption}, the undirectedness of the edge weights. Both
  terms on the \LHS\ appear when taking a max over $x$ and
  $\alpha$ in~\Thmref{thm:discrete-variational-formula-for-H-bar}. Hence,
  using~\Propref{prop:simplifying-the-set-of-functions-S}, we get
  \[
    \max_x \mathcal{H}(\phi,p,x) = \max_x
    \mathcal{H}_{\rm sym}(f(\w),p,\w) \quad\almostsurely
  \qedhere\]
\end{proof}

In the following, we will write $\mathcal{H}_{\rm sym}(f,\w)$ instead of
$\mathcal{H}_{\rm sym}(f,p,\w)$ since $p$ plays no role in the
argument. The idea behind the algorithm is simple. At each iteration, it tries to reduce
the essential supremum over $\w$ by modifying $f(\w)$, while ensuring
that the modified $f$ remains inside $F$. It turns out that if the
algorithm fails to reduce the sup of $f$, then $f$ must be a
minimizer. We explain what we're trying to do in each step in the
proof of convergence of the algorithm. So we suggest skimming the
algorithm first, and returning to the definition of each step when
reading the proof.\newline

\subsection*{Start algorithm}
\begin{enumerate}
\item \label{step:find-distance-between-sup-and-mean} Start with any
  $f_0 \in F$. Let $\mu_0 = \E[\mathcal{H}_{\rm sym}(f_0,\w)]$, and let
  \[
    d_0 = \esssup_{\w \in \W} \mathcal{H}_{\rm sym}(f_0,\w) - \mu_0.
  \]
  If $d_0 = 0$, stop.
\item \label{step:define-min-set-S-and-I} Define the sets
  \begin{align}
    \textup{MIN}_0 & :=  \{ \w : \mathcal{H}_{\rm sym}(f_0,\w) = \min_x \mathcal{H}_{\rm sym}(x,\w)  \},\\
    S & :=  \{ \w : \mathcal{H}_{\rm sym}(f_0,\w) > \mu_0 \},\vphantom{\min_x}\\
    I & :=  \{ \w : \mathcal{H}_{\rm sym}(f_0,\w) < \mu_0 \}.
        \label{eq:definition-of-the-sets-MIN-S-I}
  \end{align}
  If
  \[ \esssup_{\w \in \textup{MIN}_0} \mathcal{H}_{\rm sym}(f_0,\w) = \esssup_{\w
      \in \W} \mathcal{H}_{\rm sym}(f_0,\w), \] stop.
\item \label{step:construct-delta-f} Let $\Delta f^*(\w)$ be such that
  \[ \mathcal{H}_{\rm sym}(f_0 + \Delta f^*(\w),\w) =
    \mathcal{H}_{\rm sym}(x^*(\w),\w),\]
  where $x^*(\w) = \argmin_{x} \mathcal{H}_{\rm sym}(x,\w)$.  Define the
  sets
  \begin{align*}
    S_+ & :=  \{ \w \in S \setminus \textup{MIN}_0 : f_0 > x^*(\w) \},\\
    S_- & :=  \{ \w \in S \setminus \textup{MIN}_0 : f_0 < x^*(\w)  \}.
  \end{align*}
  Let
  \begin{equation}
    \Delta f(\w) = \begin{cases} 
                             \max(- a (\mathcal{H}_{\rm sym}(f_0,\w) - \mu_0),~\Delta f^*(\w) ), 
                             & \w \in S_+, \\
                             \min( a (\mathcal{H}_{\rm sym}(f_0,\w) - \mu_0),~\Delta f^*(\w) ), 
                             & \w \in S_- , \\
                             a \xi (\mu_0 - \mathcal{H}_{\rm sym}(f_0,\w) ), & \w \in I ,\\
                             0,   & \text{elsewhere},
                                          \end{cases}
                         \label{eq:algorithm-def-of-delta-f}
                       \end{equation}
                       where
                       \begin{equation}
                         \xi = - \frac{ \E [ \Delta f(\w), S_+ \cup S_-]}{ \E[ a (\mu_0 - \mathcal{H}_{\rm sym}(f_0,\w) ), I]  }.
                         \label{eq:algorithm-xi-definition}
                       \end{equation}
                       Let $f_1 = f_0 + \Delta f(\w)$. Return to step
                       $1$.
                     \end{enumerate}\vspace{-7pt}
\subsection*{End algorithm}

                     Recall~\Thmref{thm:algorithm-convergence}:

\begin{theorem*}
  Let $\{f_n\}_{n=0}^{\infty}$ be the sequence obtained by iterating
  the algorithm on an initial point $f_0 \in F$. Let $d_n = \esssup_{\W} \mathcal{H}_{\rm sym}(f_n(\w),p,\w) - \E[ \mathcal{H}_{\rm sym}]$. There are three 
  possibilities:
  \begin{enumerate}
  \item If it terminates in a finite number of steps with
    $d_n = 0$, we have a minimizer that's a corrector.
  \item If it terminates in a finite number of steps with $d_n > 0$, we
    have a minimizer that's not a corrector.
  \item If it does not terminate, we produce a corrector in the
    limit. That is, $f_n \to f_{\infty}$ in measure and
    $\mathcal{H}_{\rm sym}(f_{\infty},p,\w) = \Heff(p)~\almostsurely$
  \end{enumerate}
\end{theorem*}

Next, we briefly illustrate two different types of minimizers obtained
by the algorithm. Suppose we're in two dimensions. If $p=(1,1)$, then
\[
  f_0(\w) = \frac{\min_{i=1,2} \tau(0,e_i)}{\E[\min_{i=1,2}
    \tau(0,e_i)]} - 1
\]
is a corrector. If $p=(-1,1)$, then $\mathcal{H}_{\rm sym}(\,\cdot\,,p,\w)$
takes its minimum value at
\[
  \frac{(\tau(0,e_2) - \tau(0,e_1))}{(\tau(0,e_1)+\tau(0,e_2))} :=
  f_0.
\]
If we assume that $\tau(0,e_1)$ and $\tau(0,e_2)$ are i.i.d.,
$\E[f_0] = 0$. Then we're clearly at a minimizer, but
$f_0$ is not a corrector.

We need the following lemma to
prove~\Thmref{thm:algorithm-convergence}.
\begin{lemma}\label{lem:H-sym-is-convex-and-its-min-is-measurable}
  The function $\mathcal{H}_{\rm sym}(x,\w)$ has the following properties:
  \begin{enumerate}
  \item For each $\w$, it is convex in $x$.
  \item It has a unique measurable minimum $x^*(\w)$.
  \item Its left and right derivatives (in $x$) satisfy
    \begin{alignat*}{2}
      D_{-} \mathcal{H}_{\rm sym}(x,\w) &\in [b^{-1},a^{-1}], \quad & x &\geq x^*(\w),\\*
      D_+ \mathcal{H}_{\rm sym}(x,\w) &\in [-a^{-1},-b^{-1}], \quad & x &\leq x^*(\w).
    \end{alignat*}
  \end{enumerate}
\end{lemma}
We will prove~\Lemref{lem:H-sym-is-convex-and-its-min-is-measurable}
after proving~\Thmref{thm:algorithm-convergence}.
\begin{figure}
  \centering
  \includegraphics[height=0.3\textheight]{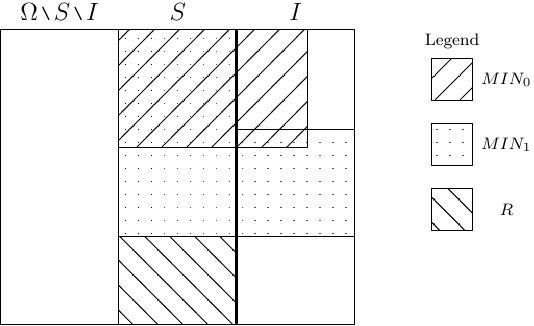}
  \caption{Sketch of sets in algorithm. The outer rectangle represents
    the probability space $\W$. It shows the situation where
    $R \neq S \setminus \textup{MIN}_0$. In this case, there is a possibility
    that the algorithm may terminate in the next step.}
  \label{fig:sets-in-algorithm}
\end{figure}

\pagebreak[2]
\begin{proof}[Proof of~\Thmref{thm:algorithm-convergence}]\hfill
\begin{enumerate}
\item  In the first step, we compute $d_0$, the distance
  between the mean and supremum of $\mathcal{H}_{\rm sym}(f,\w)$. If
  $d_0 = 0$, $f$ must be a corrector, and
  by~\Corref{cor:inf-sup-inequality-for-limiting-Hamiltonian} it must
  be a minimizer. Therefore, we stop the algorithm.
\item  $\textup{MIN}_0$ is the set on which $\mathcal{H}_{\rm sym}(f_0,\w)$
  cannot be lowered further. $S$ and $I$ are the sets on which
  $\mathcal{H}_{\rm sym}(f_0,\w)$ is higher and lower than its mean
  $\mu_0$, respectively. $f_0$~will be modified on these two sets in step 3.

Lemma~\ref{lem:H-sym-is-convex-and-its-min-is-measurable} says that
  $\mathcal{H}_{\rm sym}(\,\cdot\,,\w)$ is convex and has a minimum at
  $x^*(\w)$. So there is the possibility of the algorithm getting
  ``stuck'' at a minimum of $\mathcal{H}_{\rm sym}$. That is, $f_0$ might
  be such that
  $\mathcal{H}_{\rm sym}(f_0,\w) = \mathcal{H}_{\rm sym}(x^*(\w),\w)$ on a set
  of positive measure. Suppose we also have
  \[
    \esssup_{\w \in \textup{MIN}_0} \mathcal{H}_{\rm sym}(f_0,\w) = \esssup_{\w \in
      \W} \mathcal{H}_{\rm sym}(f_0,\w).
  \]
  Then, for any other $g \in F$, we clearly have
  $\mathcal{H}_{\rm sym}(g(\w),\w) \geq \mathcal{H}_{\rm sym}(f_0(\w),\w)$ on
  $\textup{MIN}_0$. Hence $f_0$ must be a minimizer, and we stop the algorithm.

\item 
 $\Delta f$ is first defined on the sets $S_+$ and
  $S_-$ so that the supremum falls. Then, $\Delta f$ is defined on $I$
  so that it satisfies
  \[
   \E[\Delta f] = 0,
  \]
  and therefore $f_0 + \Delta f$ remains in the set $F$.
    
  We must 
ensure that $\xi$
  in~\Eqref{eq:algorithm-xi-definition} is not infinite. Note that
  $\E[(\mu_0 - \mathcal{H}_{\rm sym}(f_0,\w) ), I] > 0$ since we've assumed that $\mathcal{H}_{\rm sym}(f_0,\w) < \mu_0$ on $I$. Hence $\Delta f$ is well-defined on $I$. Since 
  \begin{equation*}
    \E[ \left( \mathcal{H}_{\rm sym}(f_0,\w) - \mu_0 \right) , S ]  
    = 
    \E[ \left( \mu_0 - \mathcal{H}_{\rm sym}(f_0,\w) \right) , I ] ,
  \end{equation*}
  we have
  \[
    \begin{split}
      | \E[\Delta f, S_+ \cup S_-] |
      & \leq a \E[(\mathcal{H}_{\rm sym}(f_0,\w) - \mu_0),S_+ \cup S_-],  \\
      & \leq a \E[\mu_0 - \mathcal{H}_{\rm sym}(f_0,\w), I] .
    \end{split}
  \]
  Therefore from~\Eqref{eq:algorithm-xi-definition},
  \begin{equation}
    -1 \leq \xi  \leq 1  .
    \label{eq:algorithm-xi-is-bounded}
  \end{equation}
\end{enumerate}

  Next, we claim that if $\esssup \mathcal{H}_{\rm sym}(f_1)$ does not
  fall sufficiently at the end of step $3$, it will terminate in the next
  iteration.
  \begin{claim}
    If
    \begin{equation}
      \esssup_{\w \in \W} \mathcal{H}_{\rm sym}(f_1,\w) > \esssup_{\w \in \W} \mathcal{H}_{\rm sym}(f_0,\w) - d_0 \frac{a}{b}, 
      \label{eq:suppose-that-H-f1-does-not-fall-enough}
    \end{equation}
    the algorithm will terminate when it goes to step $2$ in the
    following iteration. That is, we will have
    \[
      \esssup_{\w \in \W} \mathcal{H}_{\rm sym}(f_1,\w) =
      \esssup_{\textup{MIN}_1}\mathcal{H}_{\rm sym}(f_1,\w),
    \]
    where $\textup{MIN}_1$ is defined
    in~\Eqref{eq:definition-of-the-sets-MIN-S-I} with $f_0$ replaced
    by $f_1$.
    \label{claim:intermediate-termination-possibility-if-sup-does-not-fall-enough}
  \end{claim}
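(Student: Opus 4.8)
The plan is to partition the probability space according to which branch of the step-3 definition of $\Delta f$ is used at $\w$, and to show that, outside a measurable set $N$ that is contained in $MIN_1$, one has $\mathcal{H}_{sym}(f_1,\w)\le M_0-da/b$ almost surely, where $M_0:=\esssup_{\w\in\W}\mathcal{H}_{sym}(f_0,\w)$ and $d=M_0-\mu_0$. Since hypothesis~\eqref{eq:suppose-that-H-f1-does-not-fall-enough} asserts $\esssup_{\w\in\W}\mathcal{H}_{sym}(f_1,\w)>M_0-da/b$, the essential supremum of $\mathcal{H}_{sym}(f_1,\cdot)$ is then forced to be attained on $N\subseteq MIN_1$, which is precisely the termination test performed in step 2 of the next iteration.

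First I would record what is needed from~\Lemref{lem:H-sym-is-convex-and-its-min-is-measurable}: for each $\w$ the function $\mathcal{H}_{sym}(\cdot,\w)$ is convex with a unique minimizer $x^*(\w)$, is $a^{-1}$-Lipschitz, and on each side of $x^*(\w)$ its one-sided slopes have absolute value in $[b^{-1},a^{-1}]$. Let $N$ be the union of $MIN_0$ with the portion of $S_+\cup S_-$ on which the outer $\max$ (resp.\ $\min$) in the definition of $\Delta f$ selects $\Delta f^*(\w)$. On $MIN_0$ we have $\Delta f=0$, so $f_1=f_0$ still minimizes $\mathcal{H}_{sym}(\cdot,\w)$; on the remaining part of $N$ we have $f_1=f_0+\Delta f^*(\w)=x^*(\w)$. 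In either case $\mathcal{H}_{sym}(f_1,\w)=\min_x\mathcal{H}_{sym}(x,\w)$, so $N\subseteq MIN_1$.

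Next I would estimate $\mathcal{H}_{sym}(f_1,\w)$ on $\W\setminus N$, splitting into three cases. On the ``truncated'' part of $S_+\cup S_-$ (its complement in $N$), $f_1$ is obtained from $f_0$ by a move of size exactly $a(\mathcal{H}_{sym}(f_0,\w)-\mu_0)$ toward $x^*(\w)$, and by the $\max$/$\min$ truncation this move does not pass $x^*(\w)$; since the slope has magnitude at least $b^{-1}$ along the traversed segment, $\mathcal{H}_{sym}(f_1,\w)\le\mathcal{H}_{sym}(f_0,\w)-\tfrac{a}{b}\bigl(\mathcal{H}_{sym}(f_0,\w)-\mu_0\bigr)$. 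As $a\le b$, the map $h\mapsto h-\tfrac{a}{b}(h-\mu_0)$ is nondecreasing, and $\mathcal{H}_{sym}(f_0,\w)\le M_0$, so this is at most $M_0-\tfrac{a}{b}(M_0-\mu_0)=M_0-da/b$. On $I$ we have $|\Delta f(\w)|=a|\xi|\,(\mu_0-\mathcal{H}_{sym}(f_0,\w))\le a(\mu_0-\mathcal{H}_{sym}(f_0,\w))$ by the bound $|\xi|\le1$ from~\eqref{eq:algorithm-xi-is-bounded}, whence $a^{-1}$-Lipschitzness gives $\mathcal{H}_{sym}(f_1,\w)\le\mathcal{H}_{sym}(f_0,\w)+(\mu_0-\mathcal{H}_{sym}(f_0,\w))=\mu_0$. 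On the leftover set $\{\w:\mathcal{H}_{sym}(f_0,\w)=\mu_0\}$, $\Delta f=0$ and $\mathcal{H}_{sym}(f_1,\w)=\mu_0$. In the last two cases $\mu_0=M_0-d\le M_0-da/b$ since $a\le b$, so in all three $\mathcal{H}_{sym}(f_1,\w)\le M_0-da/b$.

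Finally I would combine this estimate with~\eqref{eq:suppose-that-H-f1-does-not-fall-enough}: the set $\{\w:\mathcal{H}_{sym}(f_1,\w)>M_0-da/b\}$ has positive measure, is contained modulo a null set in $N\subseteq MIN_1$, and on it $\mathcal{H}_{sym}(f_1,\cdot)$ comes arbitrarily close to $\esssup_{\w\in\W}\mathcal{H}_{sym}(f_1,\w)$; hence $\esssup_{\w\in MIN_1}\mathcal{H}_{sym}(f_1,\w)=\esssup_{\w\in\W}\mathcal{H}_{sym}(f_1,\w)$, which is the stopping condition that makes the algorithm terminate in step 2. The step I expect to be the main obstacle is the case analysis in the third paragraph: one must verify that the truncated moves on $S_+\cup S_-$ genuinely head toward $x^*(\w)$ and stop short of it — so that the slope lower bound of~\Lemref{lem:H-sym-is-convex-and-its-min-is-measurable} may be invoked along the entire traversed segment — and that the rescaling by $a\xi$ on $I$ can neither overshoot nor lift $\mathcal{H}_{sym}$ back above $\mu_0$; both reduce to the truncations written into step 3 together with $|\xi|\le 1$.
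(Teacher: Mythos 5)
Your argument is essentially the paper's own proof: the paper introduces the set $R\subset S\setminus MIN_0$ of points where the move is truncated to $a(\mathcal{H}_{sym}(f_0,\w)-\mu_0)$ (the complement of your ``$\Delta f^*$ selected'' set), derives the same drop of at least $\tfrac{a}{b}\bigl(\mathcal{H}_{sym}(f_0,\w)-\mu_0\bigr)$ there from the slope lower bound, the same bound $\mathcal{H}_{sym}(f_1,\w)\le\mu_0$ on $I$ from $|\xi|\le 1$ and the slope upper bound, and then concludes exactly as you do that under~\eqref{eq:suppose-that-H-f1-does-not-fall-enough} the essential supremum must be attained on $(S\setminus R)\cup(S\cap MIN_0)\subseteq MIN_1$. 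The only imprecision is your assertion that $\Delta f=0$ on all of $MIN_0$ (hence $N\subseteq MIN_1$): on $MIN_0\cap I$ the $I$-branch applies and $\Delta f$ need not vanish, which is why the paper claims only $S\cap MIN_0\subset MIN_1$; this is harmless, since your own $I$-estimate gives $\mathcal{H}_{sym}(f_1,\cdot)\le\mu_0\le M_0-\tfrac{da}{b}$ there, so the set carrying the essential supremum still lies in $N\setminus I\subseteq MIN_1$.
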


  We will show
  Claim~\ref{claim:intermediate-termination-possibility-if-sup-does-not-fall-enough}
  after completing the proof of the theorem. Next, we prove that if
  the algorithm does not terminate in either
  step~\ref{step:find-distance-between-sup-and-mean}
  or~\ref{step:define-min-set-S-and-I}, we produce a minimizing
  sequence. Suppose that the algorithm does not terminate, and let
  $f_n$ be the $n$\textsuperscript{th} iterate. Let $d_n$ be the
  corresponding distance between the ess\,sup and mean of the
  Hamiltonian. Claim~\ref{claim:intermediate-termination-possibility-if-sup-does-not-fall-enough}
  gives us the estimate
  \[
    \esssup_{\w \in \W} \mathcal{H}_{\rm sym}(f_n,\w) \leq \esssup_{\w \in
      \W} \mathcal{H}_{\rm sym}(f_{n-1},\w) - d_{n-1} \frac{a}{b}.
  \]
  Since $\mathcal{H}_{\rm sym} \geq 0$, we must have
  $\sum_{n=0}^{\infty} d_n < \infty$. The form of $\Delta f_n$
  in~\Eqref{eq:algorithm-def-of-delta-f} implies
  \begin{gather*}
    |\Delta f_n(\w)| \leq d_n,  \quad \w \in S_+ \cup S_-,\\*
    \E[ |\Delta f_n(\w)|, I ]  = \left| \E[ \Delta f_n, S_+ \cup S_- ] \right|
                                \leq d_n.
  \end{gather*}
  Hence, $\sum_{i=0}^{\infty} \Delta f_n$ is absolutely summable in
  $L^1(\W)$ and therefore $f_n \to f_{\infty}$ in measure and in
  $L^1$. Since $\mathcal{H}_{\rm sym}(f_n(\w),\w)$ is uniformly bounded, so is
  the sequence $\{f_n\}_{n=0}^{\infty}$.

  Then, for any $p > 1$,
  \[
    \begin{split}
      0 &=  \lim_n d_n \\
      &= \lim_{n \to \infty} \left( \esssup \mathcal{H}_{\rm sym}(f_n(\w),\w) - \int \mathcal{H}_{\rm sym}(f_n(\w),\w)  \Prob(d\w) \right) ,\\
      &\geq \Norm{\mathcal{H}_{\rm sym}(f_{\infty},\w)}{p} - \int \mathcal{H}_{\rm sym}(f_{\infty},\w) \geq 0,\\
    \end{split}
  \]
  using the continuity of the $\mathcal{H}_{\rm sym}(x,\w)$ in the $x$-variable,
  its nonnegativity, and the bounded convergence theorem. This implies that $\mathcal{H}_{\rm sym}(f_{\infty},\w)$ is a constant and thus, $f_{\infty}$ is a corrector. This completes the proof 
  except
  for~\Claimref{claim:intermediate-termination-possibility-if-sup-does-not-fall-enough}. 

\end{proof}

\begin{proof}[Proof of~\Claimref{claim:intermediate-termination-possibility-if-sup-does-not-fall-enough}]
  It will be useful to refer to~\Figref{fig:sets-in-algorithm}, which
  is a visual representation of the sets defined in the algorithm.
  Let
  \[
    R := \{ \w \in S \setminus \textup{MIN}_0 : a|\mathcal{H}_{\rm sym}(f_0,\w) -
    \mu_0)| < \Delta f^*(\w) \}
  \]
  be the set on which we can modify $f_0$ without hitting the minimum
  of $\mathcal{H}_{\rm sym}(f_0,\w)$; i.e.,
  $\mathcal{H}_{\rm sym}(f_1,\w) > \mathcal{H}_{\rm sym}(x^*(\w),\w)$. From the
  definition of $\Delta f$ and the bound on the derivatives of
  $\mathcal{H}_{\rm sym}(\,\cdot\,,\w)$
  in~\Lemref{lem:H-sym-is-convex-and-its-min-is-measurable}, we have
  \begin{align*}
       \mathcal{H}_{\rm sym}(f_0,\w)  - \frac{1}{a}a( & \mathcal{H}_{\rm sym}(f_0,\w) - \mu_0)\\*
      \leq \mathcal{H}_{\rm sym}(f_1,\w)\\*
          \leq  \mathcal{H}_{\rm sym}(f_0,\w) - \frac{1}{b} a (\mathcal{H}_{\rm sym}(f_0,\w) - \mu_0) \quad \w \in R 
      ~\almostsurely.
 \end{align*}
  Therefore,
  \begin{equation}
    \mu_0 \leq  \esssup_{\w \in R} \mathcal{H}_{\rm sym}(f_1,\w) \leq  \esssup_{\w \in R}  \mathcal{H}_{\rm sym}(f_0,\w) - \frac{da}{b}.
    \label{eq:bound-on-H-f1-on-set-R}
  \end{equation}
  Similarly for $\w \in I$, we use the bound on $\xi$
  in~\Eqref{eq:algorithm-xi-is-bounded} and the derivative bound
  in~\Lemref{lem:H-sym-is-convex-and-its-min-is-measurable} to get
  \begin{equation}
    \mathcal{H}_{\rm sym}(f_1,\w) \leq \mathcal{H}_{\rm sym}(f_0,\w) + \left| \xi \right| (\mu_0 - \mathcal{H}_{\rm sym}(f_0,\w)) \leq \mu_0, \quad \w \in I ~\almostsurely 
    \label{eq:bound-on-H-f1-on-set-I}
  \end{equation}
  It's clear that $S \setminus R \subset \textup{MIN}_1$, since $S \setminus R$
  represents the set on which we will hit the minimum of
  $\mathcal{H}_{\rm sym}$ when modifying $f_0$. Since $\Delta f = 0$ on
  $S \cap \textup{MIN}_0$, we must also have $S \cap \textup{MIN}_0 \subset \textup{MIN}_1$.

  Consider the claim
  in~\Eqref{eq:suppose-that-H-f1-does-not-fall-enough} again; this
  and equations~\Eqref{eq:bound-on-H-f1-on-set-R}
  and~\Eqref{eq:bound-on-H-f1-on-set-I} imply that we can ignore the
  sets $R$ and $I$ when taking a sup over $\W$. It's clear that we can
  ignore $\W \setminus S \setminus I$ too, since
  $\mathcal{H}_{\rm sym}(f_0,\w) = \mu_0$ on this set. Summarizing, we get
  \[
    \begin{split}
      \esssup_{\W} \mathcal{H}_{\rm sym}(f_1,\w) & = \esssup_{S \setminus
        R ~\cup~ S \cap \textup{MIN}_0 } \mathcal{H}_{\rm sym}(f_1,\w) =
      \esssup_{\textup{MIN}_1} \mathcal{H}_{\rm sym}(f_1,\w). 
    \end{split}
  \]
  Hence, the algorithm terminates at step $2$ in the next iteration.
\end{proof}

%
To finish, we complete the proof
of~\Lemref{lem:H-sym-is-convex-and-its-min-is-measurable}.
\begin{proof}[Proof
  of~\Lemref{lem:H-sym-is-convex-and-its-min-is-measurable}]
  Clearly $\mathcal{H}_{\rm sym}$ is convex. Its minimum is unique since
  $\tau(0,\alpha,\w) \leq b$, and therefore, it cannot have a ``flat
  spot'' parallel to the $t$-axis.

  $\mathcal{H}_{\rm sym}$ can only take its minimum at a minimum of $|t +
  p\cdot\alpha|/\tau(0,\alpha,\w)$ for some $\alpha \in A^+$, or when $t$ is
  such that $|t + p\cdot \alpha_1|/\tau(0,\alpha_1,\w) = |t + p\cdot
  \alpha_2|/\tau(0,\alpha_2,\w)$ for any $\alpha_1,\alpha_2 \in A^+$. There are
  only a finite number of such possibilities; we can compute all of them
  explicitly, and hence $x^*(\w)$ is measurable. 

  We have $D_{-} \mathcal{H}_{\rm sym}(t,\w) \in [b^{-1},a^{-1}]$ or $D_+
  \mathcal{H}_{\rm sym}(t,\w) \in [-a^{-1},-b^{-1}]$ for all $t$, since
  $\mathcal{H}_{\rm sym}$ is a max of linear functions each with absolute
  slopes between $b^{-1}$ and $a^{-1}$.
\end{proof}

Consider the following special case of \FPP~under the symmetry
assumption. Suppose we have a periodic medium with equal periods in all
directions; i.e., the translations satisfy for some fixed $n$
 \[
     \tau(0,\cdot,V^n \w) = \tau(0,\cdot,V \w) ~\almostsurely.
 \]
Then, the edge weights $\vec{q}(\w) = (\tilde{\tau}(e_1,\w), \ldots, \tilde{\tau}(e_d,\w))$ can only take a finite number of different values $\{\vec{q}_0,\ldots,\vec{q}_{n-1}\}$. 
Define the sets
\[
  A_i := \{ \w \in \W : \tau(0,\cdot,\w) = \vec{q}_i \}, 
  \quad i=0,\ldots,n-1. 
\]
 Periodicity forces the constraint $\Prob(A_i) = 1/n$. The set $F$ of functions in~\Eqref{eq:set-S-under-symmetry-assumption} can be restricted to
 \begin{equation}
     F := \left\{ f(\w) : f(\w) = \sum_{i=0}^{n-1} f_i 1_{A_i}(\w),~f_i \in \R,~E[f] = 0, \, \Norm{f}{\infty} < \infty \right\},
   \label{eq:set-of-functions-F-in-variational-formula}
 \end{equation}
 and the algorithm continues to produce a minimizer. 
 
  Periodic homogenization has been well studied and there are many algorithms to produce the effective Hamiltonian; see for example,~\citet{gomes_computing_2004} or~\citet{oberman_homogenization_2009}. Our contribution here is that the algorithm works even if $\tau(0,\cdot,\w)$ takes an uncountable number of values; i.e., the period is infinite. It's worth stating that our algorithm appears to be quite fast, even compared to large time methods like~\citet{oberman_homogenization_2009}.

 \begin{remark}
   The symmetry assumption is a massive simplification, and removing
   this is a real challenge. If the generating translations $V^{e_i}$ are rationally
   related, we ought to be able to generalize the algorithm with a
   little work. However, taking this route in general---solving the loop/cocycle
   condition as
   in~\Propref{prop:simplifying-the-set-of-functions-S}--- is probably
   hopeless. 
 \end{remark}

\ack I'd like to acknowledge S.~Chatterjee, R.V.~Kohn and S.~R.~S.\ 
Varadhan for their advice and encouragement. Special thanks to S~.R~.S\ 
Varadhan for numerous helpful suggestions. I'd also like to thank
N.~Georgiou, F.~Rassoul-Agha and T.~Sepp\"al\"ainen for helping me
understand their work. Special thanks to T.~Sepp\"al\"ainen for
catching a serious mistake in an earlier version of this paper. I also
appreciate the comments of an anonymous reviewer, and especially his or
her encouragement to rewrite the paper the way it was meant to be
written. I had several helpful discussions with J.\,Portegis, M.\,Harel
and B.\,Mehrdad. I was partially supported by the NSF grants
DMS-1007524 and DMS-1208334.

\bibliographystyle{chicago}
\bibliography{/home/arjun/master-bibtex}

\begin{thebibliography}{}

\bibitem[\protect\citeauthoryear{Arisawa}{Arisawa}{1998}]{MR1614615}
Arisawa, M. (1998).
\newblock Ergodic problem for the {H}amilton-{J}acobi-{B}ellman equation. {II}.
\newblock {\em Ann. Inst. H. Poincar\'e Anal. Non Lin\'eaire\/}~{\em 15\/}(1),
  1--24.

\bibitem[\protect\citeauthoryear{Armstrong, Cardaliaguet, and
  Souganidis}{Armstrong et~al.}{2012}]{armstrong_error_2012}
Armstrong, S.~N., P.~Cardaliaguet, and P.~E. Souganidis (2012, June).
\newblock Error estimates and convergence rates for the stochastic
  homogenization of {H}amilton-{J}acobi equations.
\newblock {\em {arXiv:1206.2601} [math]\/}.

\bibitem[\protect\citeauthoryear{Armstrong and Souganidis}{Armstrong and
  Souganidis}{2012}]{armstrong_stochastic_2012}
Armstrong, S.~N. and P.~E. Souganidis (2012, March).
\newblock Stochastic homogenization of level-set convex {Hamilton-Jacobi}
  equations.
\newblock {\em {arXiv:1203.6303} [math]\/}.

\bibitem[\protect\citeauthoryear{Armstrong and Tran}{Armstrong and
  Tran}{2013}]{armstrong_stochastic_2013}
Armstrong, S.~N. and H.~V. Tran (2013, October).
\newblock Stochastic homogenization of viscous {H}amilton-{J}acobi equations
  and applications.
\newblock {\em {arXiv}:1310.1749 [math]\/}.

\bibitem[\protect\citeauthoryear{Auffinger and Damron}{Auffinger and
  Damron}{2013}]{auffinger_differentiability_2013}
Auffinger, A. and M.~Damron (2013).
\newblock Differentiability at the edge of the percolation cone and related
  results in first-passage percolation.
\newblock {\em Probability Theory and Related Fields\/}~{\em 156\/}(1--2),
  193--227.

\bibitem[\protect\citeauthoryear{Bardi and Capuzzo-Dolcetta}{Bardi and
  Capuzzo-Dolcetta}{1997}]{bardi_optimal_1997}
Bardi, M. and I.~Capuzzo-Dolcetta (1997).
\newblock {\em Optimal control and viscosity solutions of
  {Hamilton-Jacobi-Bellman} equations}.
\newblock Systems \& Control: Foundations \& Applications. Boston, {MA}:
  Birkhäuser Boston Inc.
\newblock With appendices by Maurizio Falcone and Pierpaolo Soravia.

\bibitem[\protect\citeauthoryear{Bensoussan, Lions, and
  Papanicolaou}{Bensoussan et~al.}{1978}]{MR503330}
Bensoussan, A., J.-L. Lions, and G.~Papanicolaou (1978).
\newblock {\em Asymptotic analysis for periodic structures}, Volume~5 of {\em
  Studies in Mathematics and its Applications}.
\newblock North-Holland Publishing Co., Amsterdam-New York.

\bibitem[\protect\citeauthoryear{Blair-Stahn}{Blair-Stahn}{2010}]{blair-stahn_first_2010}
Blair-Stahn, N.~D. (2010, May).
\newblock First passage percolation and competition models.
\newblock {\em {arXiv:1005.0649} [math]\/}.

\bibitem[\protect\citeauthoryear{Boivin}{Boivin}{1990}]{boivin_first_1990}
Boivin, D. (1990).
\newblock First passage percolation: the stationary case.
\newblock {\em Probability Theory and Related Fields\/}~{\em 86\/}(4),
  491--499.

\bibitem[\protect\citeauthoryear{Chatterjee and Dey}{Chatterjee and
  Dey}{2013}]{chatterjee_central_2013}
Chatterjee, S. and P.~S. Dey (2013).
\newblock Central limit theorem for first-passage percolation time across thin
  cylinders.
\newblock {\em Probability Theory and Related Fields\/}~{\em 156\/}(3-4),
  613--663.

\bibitem[\protect\citeauthoryear{Cox and Durrett}{Cox and
  Durrett}{1981}]{cox_limit_1981}
Cox, J.~T. and R.~Durrett (1981).
\newblock Some limit theorems for percolation processes with necessary and
  sufficient conditions.
\newblock {\em The Annals of Probability\/}~{\em 9\/}(4), 583--603.

\bibitem[\protect\citeauthoryear{Damron and Hanson}{Damron and
  Hanson}{2014}]{MR3152744}
Damron, M. and J.~Hanson (2014).
\newblock Busemann functions and infinite geodesics in two-dimensional
  first-passage percolation.
\newblock {\em Comm. Math. Phys.\/}~{\em 325\/}(3), 917--963.

\bibitem[\protect\citeauthoryear{Durrett and Liggett}{Durrett and
  Liggett}{1981}]{durrett_shape_1981}
Durrett, R. and T.~M. Liggett (1981).
\newblock The shape of the limit set in {Richardson}'s growth model.
\newblock {\em The Annals of Probability\/}~{\em 9\/}(2), 186--–193.

\bibitem[\protect\citeauthoryear{Garet and Marchand}{Garet and
  Marchand}{2005}]{MR2115045}
Garet, O. and R.~Marchand (2005).
\newblock Coexistence in two-type first-passage percolation models.
\newblock {\em Ann. Appl. Probab.\/}~{\em 15\/}(1A), 298--330.

\bibitem[\protect\citeauthoryear{Georgiou, Rassoul-Agha, and
  Sepp\"al\"ainen}{Georgiou et~al.}{2013}]{georgiou_variational_2013}
Georgiou, N., F.~Rassoul-Agha, and T.~Sepp\"al\"ainen (2013, November).
\newblock Variational formulas and cocycle solutions for directed polymer and
  percolation models.
\newblock {\em {arXiv:1311.3016} [math]\/}.

\bibitem[\protect\citeauthoryear{Georgiou, Rassoul-Agha, and
  Sepp\"al\"ainen}{Georgiou et~al.}{2015}]{georgiou_geodesics_2015}
Georgiou, N., F.~Rassoul-Agha, and T.~Sepp\"al\"ainen (2015, October).
\newblock Geodesics and the competition interface for the corner growth model.
\newblock {\em arXiv:1510.00860 [math]\/}.
\newblock arXiv: 1510.00860.

\bibitem[\protect\citeauthoryear{Georgiou, Rassoul-Agha, and
  Sepp{\"a}l{\"a}inen}{Georgiou et~al.}{2015}]{georgiou_stationary_2015}
Georgiou, N., F.~Rassoul-Agha, and T.~Sepp{\"a}l{\"a}inen (2015, October).
\newblock Stationary cocycles and {Busemann} functions for the corner growth
  model.
\newblock {\em arXiv:1510.00859 [math]\/}.
\newblock arXiv: 1510.00859.

\bibitem[\protect\citeauthoryear{Gomes and Oberman}{Gomes and
  Oberman}{2004}]{gomes_computing_2004}
Gomes, D.~A. and A.~M. Oberman (2004).
\newblock Computing the effective {Hamiltonian} using a variational approach.
\newblock {\em {SIAM} Journal on Control and Optimization\/}~{\em 43\/}(3),
  792--812 (electronic).

\bibitem[\protect\citeauthoryear{Grimmett and Kesten}{Grimmett and
  Kesten}{2012}]{grimmett_percolation_2012}
Grimmett, G.~R. and H.~Kesten (2012, July).
\newblock Percolation since {Saint-Flour}.
\newblock {\em {arXiv:1207.0373} [math-ph]\/}.

\bibitem[\protect\citeauthoryear{Hammersley and Welsh}{Hammersley and
  Welsh}{1965}]{hammersley_first-passage_1965}
Hammersley, J.~M. and D.~J.~A. Welsh (1965).
\newblock First-passage percolation, subadditive processes, stochastic
  networks, and generalized renewal theory.
\newblock In {\em Proc. Internat. Res. Semin., Statist. Lab., Univ. California,
  Berkeley, Calif}, pp.\  61--110. New York: Springer-Verlag.

\bibitem[\protect\citeauthoryear{Hoffman}{Hoffman}{2005}]{MR2114988}
Hoffman, C. (2005).
\newblock Coexistence for {R}ichardson type competing spatial growth models.
\newblock {\em Ann. Appl. Probab.\/}~{\em 15\/}(1B), 739--747.

\bibitem[\protect\citeauthoryear{Hoffman}{Hoffman}{2008}]{MR2462555}
Hoffman, C. (2008).
\newblock Geodesics in first passage percolation.
\newblock {\em Ann. Appl. Probab.\/}~{\em 18\/}(5), 1944--1969.

\bibitem[\protect\citeauthoryear{Johansson}{Johansson}{2000}]{johansson_shape_2000}
Johansson, K. (2000).
\newblock Shape fluctuations and random matrices.
\newblock {\em Communications in Mathematical Physics\/}~{\em 209\/}(2),
  437--476.

\bibitem[\protect\citeauthoryear{Kesten}{Kesten}{1986}]{kesten_aspects_1986}
Kesten, H. (1986).
\newblock Aspects of first passage percolation.
\newblock In {\em \'{E}cole d'\'et\'e de probabilit\'es de {S}aint-{F}lour,
  {XIV}---1984}, Volume 1180 of {\em Lecture Notes in Math.}, pp.\  125--264.
  Springer, Berlin.

\bibitem[\protect\citeauthoryear{Kingman}{Kingman}{1968}]{kingman_ergodic_1968}
Kingman, J. F.~C. (1968).
\newblock The ergodic theory of subadditive stochastic processes.
\newblock {\em Journal of the Royal Statistical Society. Series B.
  Methodological\/}~{\em 30}, 499--510.

\bibitem[\protect\citeauthoryear{Kosygina, Rezakhanlou, and Varadhan}{Kosygina
  et~al.}{2006}]{kosygina_stochastic_2006}
Kosygina, E., F.~Rezakhanlou, and S.~R.~S. Varadhan (2006).
\newblock Stochastic homogenization of {Hamilton-Jacobi-Bellman} equations.
\newblock {\em Communications on Pure and Applied Mathematics\/}~{\em
  59\/}(10), 1489--1521.

\bibitem[\protect\citeauthoryear{Krishnan}{Krishnan}{2014}]{krishnan_variational_2014-1}
Krishnan, A. (2014).
\newblock {\em Variational formula for the time-constant of first-passage
  percolation}.
\newblock ProQuest LLC, Ann Arbor, MI.
\newblock Thesis (Ph.D.)--New York University.

\bibitem[\protect\citeauthoryear{Krug and Spohn}{Krug and
  Spohn}{1988}]{krug_universality_1988}
Krug, J. and H.~Spohn (1988).
\newblock Universality classes for deterministic surface growth.
\newblock {\em Physical Review A\/}~{\em 38\/}(8), 4271.

\bibitem[\protect\citeauthoryear{Lehrer and Sorin}{Lehrer and
  Sorin}{1992}]{MR1161156}
Lehrer, E. and S.~Sorin (1992).
\newblock A uniform {T}auberian theorem in dynamic programming.
\newblock {\em Math. Oper. Res.\/}~{\em 17\/}(2), 303--307.

\bibitem[\protect\citeauthoryear{Licea, Newman, and Piza}{Licea
  et~al.}{1996}]{licea_superdiffusivity_1996}
Licea, C., C.~M. Newman, and M.~S. Piza (1996).
\newblock Superdiffusivity in first-passage percolation.
\newblock {\em Probability Theory and Related Fields\/}~{\em 106\/}(4),
  559--591.

\bibitem[\protect\citeauthoryear{Lions, Papanicolaou, and Varadhan}{Lions
  et~al.}{1987}]{lions_homogenization_1987}
Lions, P.-L., G.~Papanicolaou, and S.~R.~S. Varadhan (1987).
\newblock Homogenization of {Hamilton-Jacobi} equations.
\newblock {\em Unpublished preprint\/}.

\bibitem[\protect\citeauthoryear{Lions and Souganidis}{Lions and
  Souganidis}{2003}]{lions_correctors_2003}
Lions, P.-L. and P.~E. Souganidis (2003).
\newblock Correctors for the homogenization of {Hamilton-Jacobi} equations in
  the stationary ergodic setting.
\newblock {\em Communications on Pure and Applied Mathematics\/}~{\em
  56\/}(10), 1501--1524.

\bibitem[\protect\citeauthoryear{Lions and Souganidis}{Lions and
  Souganidis}{2005}]{lions_homogenization_2005}
Lions, P.-L. and P.~E. Souganidis (2005).
\newblock Homogenization of ``viscous'' {Hamilton-Jacobi} equations in
  stationary ergodic media.
\newblock {\em Communications in Partial Differential Equations\/}~{\em
  30\/}(1-3), 335--375.

\bibitem[\protect\citeauthoryear{Marchand}{Marchand}{2002}]{marchand_strict_2002}
Marchand, R. (2002).
\newblock Strict inequalities for the time constant in first passage
  percolation.
\newblock {\em The Annals of Applied Probability\/}~{\em 12\/}(3), 1001--1038.

\bibitem[\protect\citeauthoryear{Newman}{Newman}{1995}]{newman_surface_1995}
Newman, C.~M. (1995).
\newblock A surface view of first-passage percolation.
\newblock In {\em Proceedings of the International Congress of Mathematicians,
  Vol. 1, 2 (Z\"urich, 1994)}, Basel, pp.\  1017--1023. Birkh\"auser.

\bibitem[\protect\citeauthoryear{Oberman, Takei, and Vladimirsky}{Oberman
  et~al.}{2009}]{oberman_homogenization_2009}
Oberman, A.~M., R.~Takei, and A.~Vladimirsky (2009).
\newblock Homogenization of metric {Hamilton-Jacobi} equations.
\newblock {\em Multiscale Modeling \& Simulation\/}~{\em 8\/}(1), 269--295.

\bibitem[\protect\citeauthoryear{Rassoul-Agha and Sepp\"al\"ainen}{Rassoul-Agha
  and Sepp\"al\"ainen}{2014}]{rassoul-agha_quenched_2014}
Rassoul-Agha, F. and T.~Sepp\"al\"ainen (2014).
\newblock Quenched point-to-point free energy for random walks in random
  potentials.
\newblock {\em Probability Theory and Related Fields\/}~{\em 158\/}(3-4),
  711--750.

\bibitem[\protect\citeauthoryear{Rassoul-Agha, Sepp\"al\"ainen, and
  Yilmaz}{Rassoul-Agha et~al.}{2013}]{rassoul-agha_quenched_2013}
Rassoul-Agha, F., T.~Sepp\"al\"ainen, and A.~Yilmaz (2013).
\newblock Quenched free energy and large deviations for random walks in random
  potentials.
\newblock {\em Communications on Pure and Applied Mathematics\/}~{\em 66\/}(2),
  202--244.

\bibitem[\protect\citeauthoryear{Rezakhanlou and Tarver}{Rezakhanlou and
  Tarver}{2000}]{rezakhanlou_homogenization_2000}
Rezakhanlou, F. and J.~E. Tarver (2000).
\newblock Homogenization for stochastic {Hamilton-Jacobi} equations.
\newblock {\em Archive for Rational Mechanics and Analysis\/}~{\em 151\/}(4),
  277--309.

\bibitem[\protect\citeauthoryear{Rosenbluth}{Rosenbluth}{2008}]{rosenbluth_quenched_2008}
Rosenbluth, J.~M. (2008, April).
\newblock Quenched large deviations for multidimensional random walk in random
  environment: a variational formula.
\newblock {\em {arXiv:0804.1444} [math]\/}.

\bibitem[\protect\citeauthoryear{Sepp{\"a}l{\"a}inen}{Sepp{\"a}l{\"a}inen}{1998}]{seppalainen_exact_1998}
Sepp{\"a}l{\"a}inen, T. (1998).
\newblock Exact limiting shape for a simplified model of first-passage
  percolation on the plane.
\newblock {\em The Annals of Probability\/}~{\em 26\/}(3), 1232--1250.

\bibitem[\protect\citeauthoryear{Souganidis}{Souganidis}{1999}]{souganidis_stochastic_1999}
Souganidis, P.~E. (1999).
\newblock Stochastic homogenization of {Hamilton-Jacobi} equations and some
  applications.
\newblock {\em Asymptotic Analysis\/}~{\em 20\/}(1), 1--11.

\bibitem[\protect\citeauthoryear{van~den Berg and Kesten}{van~den Berg and
  Kesten}{1993}]{van_den_berg_inequalities_1993}
van~den Berg, J. and H.~Kesten (1993).
\newblock Inequalities for the time constant in first-passage percolation.
\newblock {\em The Annals of Applied Probability\/}~{\em 3\/}(1), 56--80.

\end{thebibliography}

\end{document}